\documentclass[11pt]{amsart}

%%%%%%%%%%%%%%%%%%%%%%%%%%%%%%%%%%%%%%%%%%%%%%%%%%%%%%%%%%%%%%%%%%%%%%%%%%%%%%%%%%%%%%%%%%%%%%%%%%%%%%%%%
%%%%%%%%%%%%%%%%%%%%%%%%%%%%%%%%%%%%%%%%%%%%%%%%%%%%%%%%%%%%%%%%%%%%%%%%%%%%%%%%%%%%%%%%%%%%%%%%%%%%%%%%%
%%%%%%%%%%%%%%%%%%%%%%%%%%%%%%%%%%%%%%%%%%%%%%%%%%%%%%%%%%%%%%%%%%%%%%%%%%%%%%%%%%%%%%%%%%%%%%%%%%%%%%%%%

% COMMANDS FOR A NICER TABLE OF CONTENTS

\usepackage{etoolbox}

% Modifications to amsart ToC-related macros...
\makeatletter
\let\old@tocline\@tocline
\let\section@tocline\@tocline
% Insert a dotted ToC-line for \subsection and \subsubsection only
\newcommand{\subsection@dotsep}{4.5}
\newcommand{\subsubsection@dotsep}{4.5}
\patchcmd{\@tocline}
{\hfil}
{\nobreak
	\leaders\hbox{$\m@th
		\mkern \subsection@dotsep mu\hbox{.}\mkern \subsection@dotsep mu$}\hfill
	\nobreak}{}{}
\let\subsection@tocline\@tocline
\let\@tocline\old@tocline

\patchcmd{\@tocline}
{\hfil}
{\nobreak
	\leaders\hbox{$\m@th
		\mkern \subsubsection@dotsep mu\hbox{.}\mkern \subsubsection@dotsep mu$}\hfill
	\nobreak}{}{}
\let\subsubsection@tocline\@tocline
\let\@tocline\old@tocline

\let\old@l@subsection\l@subsection
\let\old@l@subsubsection\l@subsubsection

\def\@tocwriteb#1#2#3{%
	\begingroup
	\@xp\def\csname #2@tocline\endcsname##1##2##3##4##5##6{%
		\ifnum##1>\c@tocdepth
		\else \sbox\z@{##5\let\indentlabel\@tochangmeasure##6}\fi}%
	\csname l@#2\endcsname{#1{\csname#2name\endcsname}{\@secnumber}{}}%
	\endgroup
	\addcontentsline{toc}{#2}%
	{\protect#1{\csname#2name\endcsname}{\@secnumber}{#3}}}%

% Handle section-specific indentation and number width of ToC-related entries
\newlength{\@tocsectionindent}
\newlength{\@tocsubsectionindent}
\newlength{\@tocsubsubsectionindent}
\newlength{\@tocsectionnumwidth}
\newlength{\@tocsubsectionnumwidth}
\newlength{\@tocsubsubsectionnumwidth}
\newcommand{\settocsectionnumwidth}[1]{\setlength{\@tocsectionnumwidth}{#1}}
\newcommand{\settocsubsectionnumwidth}[1]{\setlength{\@tocsubsectionnumwidth}{#1}}
\newcommand{\settocsubsubsectionnumwidth}[1]{\setlength{\@tocsubsubsectionnumwidth}{#1}}
\newcommand{\settocsectionindent}[1]{\setlength{\@tocsectionindent}{#1}}
\newcommand{\settocsubsectionindent}[1]{\setlength{\@tocsubsectionindent}{#1}}
\newcommand{\settocsubsubsectionindent}[1]{\setlength{\@tocsubsubsectionindent}{#1}}

% Handle section-specific formatting and vertical skip of ToC-related entries
% \@tocline{<level>}{<vspace>}{<indent>}{<numberwidth>}{<extra>}{<text>}{<pagenum>}
\renewcommand{\l@section}{\section@tocline{1}{\@tocsectionvskip}{\@tocsectionindent}{}{\@tocsectionformat}}%
\renewcommand{\l@subsection}{\subsection@tocline{1}{\@tocsubsectionvskip}{\@tocsubsectionindent}{}{\@tocsubsectionformat}}%
\renewcommand{\l@subsubsection}{\subsubsection@tocline{1}{\@tocsubsubsectionvskip}{\@tocsubsubsectionindent}{}{\@tocsubsubsectionformat}}%
\newcommand{\@tocsectionformat}{}
\newcommand{\@tocsubsectionformat}{}
\newcommand{\@tocsubsubsectionformat}{}
\expandafter\def\csname toc@1format\endcsname{\@tocsectionformat}
\expandafter\def\csname toc@2format\endcsname{\@tocsubsectionformat}
\expandafter\def\csname toc@3format\endcsname{\@tocsubsubsectionformat}
\newcommand{\settocsectionformat}[1]{\renewcommand{\@tocsectionformat}{#1}}
\newcommand{\settocsubsectionformat}[1]{\renewcommand{\@tocsubsectionformat}{#1}}
\newcommand{\settocsubsubsectionformat}[1]{\renewcommand{\@tocsubsubsectionformat}{#1}}
\newlength{\@tocsectionvskip}
\newcommand{\settocsectionvskip}[1]{\setlength{\@tocsectionvskip}{#1}}
\newlength{\@tocsubsectionvskip}
\newcommand{\settocsubsectionvskip}[1]{\setlength{\@tocsubsectionvskip}{#1}}
\newlength{\@tocsubsubsectionvskip}
\newcommand{\settocsubsubsectionvskip}[1]{\setlength{\@tocsubsubsectionvskip}{#1}}

% Adjust section-specific ToC-related macros to have a fixed-width numbering framework
\patchcmd{\tocsection}{\indentlabel}{\makebox[\@tocsectionnumwidth][l]}{}{}
\patchcmd{\tocsubsection}{\indentlabel}{\makebox[\@tocsubsectionnumwidth][l]}{}{}
\patchcmd{\tocsubsubsection}{\indentlabel}{\makebox[\@tocsubsubsectionnumwidth][l]}{}{}

% Allow for section-specific page numbering format of ToC-related entries
\newcommand{\@sectypepnumformat}{}
\renewcommand{\contentsline}[1]{%
	\expandafter\let\expandafter\@sectypepnumformat\csname @toc#1pnumformat\endcsname%
	\csname l@#1\endcsname}
\newcommand{\@tocsectionpnumformat}{}
\newcommand{\@tocsubsectionpnumformat}{}
\newcommand{\@tocsubsubsectionpnumformat}{}
\newcommand{\setsectionpnumformat}[1]{\renewcommand{\@tocsectionpnumformat}{#1}}
\newcommand{\setsubsectionpnumformat}[1]{\renewcommand{\@tocsubsectionpnumformat}{#1}}
\newcommand{\setsubsubsectionpnumformat}[1]{\renewcommand{\@tocsubsubsectionpnumformat}{#1}}
\renewcommand{\@tocpagenum}[1]{%
	\hfill {\mdseries\@sectypepnumformat #1}}

% Small correction to Appendix, since it's still a \section which should be handled differently
\let\oldappendix\appendix
\renewcommand{\appendix}{%
	\leavevmode\oldappendix%
	\addtocontents{toc}{%
		\protect\settowidth{\protect\@tocsectionnumwidth}{\protect\@tocsectionformat\sectionname\space}%
		\protect\addtolength{\protect\@tocsectionnumwidth}{2em}}%
}
\makeatother

% #1 (default is as required)

% #2

% #3
\makeatletter
\settocsectionnumwidth{2em}
\settocsubsectionnumwidth{2.5em}
\settocsubsubsectionnumwidth{3em}
\settocsectionindent{1pc}%
\settocsubsectionindent{\dimexpr\@tocsectionindent+\@tocsectionnumwidth}%
\settocsubsubsectionindent{\dimexpr\@tocsubsectionindent+\@tocsubsectionnumwidth}%
\makeatother

% #4 & #5
\settocsectionvskip{10pt}
\settocsubsectionvskip{0pt}
\settocsubsubsectionvskip{0pt}

% #6 & #7
% See #3

% #8

% #9
\settocsectionformat{\bfseries}
\settocsubsectionformat{\mdseries}
\settocsubsubsectionformat{\mdseries}
\setsectionpnumformat{\bfseries}
\setsubsectionpnumformat{\mdseries}
\setsubsubsectionpnumformat{\mdseries}

% #10
% Insert the following command inside your text where you want the ToC to have a page break

% #11
\let\oldtableofcontents\tableofcontents
\renewcommand{\tableofcontents}{%
	\vspace*{-\linespacing}% Default gap to top of CONTENTS is \linespacing.
	\oldtableofcontents}

\setcounter{tocdepth}{0}

%%%%%%%%%%%%%%%%%%%%%%%%%%%%%%%%%%%%%%%%%%%%%%%%%%%%%%%%%%%%%%%%%%%%%%%%%%%%%%%%%%%%%%%%%%%%%%%%%%%%%%%%%
%%%%%%%%%%%%%%%%%%%%%%%%%%%%%%%%%%%%%%%%%%%%%%%%%%%%%%%%%%%%%%%%%%%%%%%%%%%%%%%%%%%%%%%%%%%%%%%%%%%%%%%%%
%%%%%%%%%%%%%%%%%%%%%%%%%%%%%%%%%%%%%%%%%%%%%%%%%%%%%%%%%%%%%%%%%%%%%%%%%%%%%%%%%%%%%%%%%%%%%%%%%%%%%%%%%%

\usepackage[dvipsnames]{xcolor}
\usepackage[hyperfootnotes=false]{hyperref}
\usepackage{enumerate}
\usepackage{tikz}
\usepackage{tikz-cd}
\bibliographystyle{alpha}
\usepackage{hyperref}
\usepackage{amsthm,amsfonts,amsmath,amscd,amssymb}
\usepackage{xcolor,float}
\usepackage[all]{xy}
\usepackage{mathrsfs}
\usepackage{graphics,graphicx}
\usepackage{caption}
\usepackage{mathtools}

\setlength{\marginparwidth}{.9in}
\let\oldmarginpar\marginpar
\renewcommand\marginpar[1]{\-\oldmarginpar[\raggedleft\footnotesize #1]%
	{\raggedright\footnotesize #1}}
\setlength{\topmargin}{-50pt}
\setlength{\textheight}{24cm}
\setlength{\oddsidemargin}{5pt} \setlength{\evensidemargin}{5pt}
\setlength{\textwidth}{440pt}
\parindent=0pt
\parskip=4pt
\theoremstyle{plain}
\newtheorem{thm}{Theorem}[section]

\newtheorem{example}[thm]{Example}
\newtheorem{prop}[thm]{Proposition}

\newtheorem{cor}[thm]{Corollary}

\newtheorem{conj}[thm]{Conjecture}
\theoremstyle{definition}

\newtheorem{definition}[thm]{Definition}
\newtheorem{remark}[thm]{Remark}

\newtheorem{computation}[thm]{Computation}

\theoremstyle{remark}

\numberwithin{equation}{section}

\renewcommand{\S}{\mathbb{S}}
\renewcommand{\P}{\mathbb{P}}
\renewcommand{\L}{\mathbb{L}}

\newcommand{\D}{\mathbb{D}}
\newcommand{\N}{\mathbb{N}}
\newcommand{\Z}{\mathbb{Z}}
\newcommand{\Q}{\mathbb{Q}}
\newcommand{\R}{\mathbb{R}}
\newcommand{\C}{\mathbb{C}}
\newcommand{\SA}{\mathcal{A}}
\renewcommand{\SS}{\mathcal{S}}

\newcommand{\SB}{\mathcal{B}}

\newcommand{\A}{\mathscr{A}}
\newcommand{\La}{\Lambda}
\newcommand{\la}{\lambda}

\renewcommand{\a}{\alpha}

\newcommand{\dd}{\partial}

\newcommand{\sse}{\subseteq}
\newcommand{\lr}{\longrightarrow}

\newcommand{\Br}{\operatorname{Br}}

\newcommand{\ob}{\operatorname{ob}}

\newcommand{\Symp}{\operatorname{Symp}}

\newcommand{\GL}{\operatorname{GL}}

\newcommand{\Sh}{\operatorname{Sh}}
\newcommand{\Ob}{\operatorname{ob}}
\newcommand{\Aug}{\operatorname{Aug}}
\newcommand{\Spec}{\operatorname{Spec}}

\newcommand{\mush}{\operatorname{\mu sh}}
\newcommand{\wt}{\tilde}

\newcommand{\st}{\text{st}}

\def\Op{{\mathcal O}{\it p}}
\hyphenation{Pla-me-nevs-ka-ya}
\hyphenation{ma--ni--fold}
\hyphenation{Wein--stein}
\newcounter{daggerfootnote}

%--unordered commands, mostly textformatting--
% MADE MATH BOLD

\newcommand{\bD}{\mathbb{D}}

\newcommand{\bL}{\mathbb{L}}

\definecolor{arsenic}{rgb}{0.23, 0.27, 0.29}
\definecolor{aurometalsaurus}{rgb}{0.43, 0.5, 0.5}
\definecolor{battleshipgrey}{rgb}{0.52, 0.52, 0.51}
\definecolor{arylideyellow}{rgb}{0.91, 0.84, 0.42}
\definecolor{coolblack}{rgb}{0.0, 0.18, 0.39}

\DeclarePairedDelimiter\floor{\lfloor}{\rfloor}

\pagestyle{plain}

\begin{document}
	
	\title{Lagrangian Skeleta and Plane Curve Singularities\vspace{0.25cm}}
	%\subjclass[2010]{Primary: 53D10. Secondary: 53D15, 57R17.}
	
	\author{Roger Casals\vspace{0.25cm}}
	\address{University of California Davis, Dept. of Mathematics, Shields Avenue, Davis, CA 95616, USA}
	\email{casals@math.ucdavis.edu}

\begin{abstract} We construct closed arboreal Lagrangian skeleta associated to links of isolated plane curve singularities. This yields closed Lagrangian skeleta for Weinstein pairs $(\C^2,\La)$ and Weinstein 4-manifolds $W(\La)$ associated to max-tb Legendrian representatives of algebraic links $\La\sse(\S^3,\xi_\st)$. We provide computations of Legendrian and Weinstein invariants, and discuss the contact topological nature of the Fomin-Pylyavskyy-Shustin-Thurston cluster algebra associated to a singularity. Finally, we present a conjectural ADE-classification for Lagrangian fillings of certain Legendrian links and list some related problems.
\end{abstract}
\thispagestyle{empty}

%\dedicatory{Dedicated to Claude Viterbo on the occasion of his 60th birthday}
\maketitle
%	\tableofcontents

%\begin{center}
%	\begin{figure}[h!]
%		\centering
%		\includegraphics[scale=0.7]{BraidsIntro.png}
%		\caption{The Legendrian torus links $\La(3,6)$ (left) and $\La(4,4)$ (right).}
%		\label{fig:BraidsIntro}
%	\end{figure}
%\end{center}

%%%%%%%%%%%%%%%%%%%%%%%%%%%%%%%%%%%%%%%%%%%%%%%%%%%%%%%%%%%%%%%%%%%%%%%%%%%%%%%%%%%%%%%%%%%%%%%%%%%%%%%%%%%%%%%%%%%%%%%%%%%%%%%%%%%%%%%
\section{Introduction}\label{sec:intro}

The object of this note is to study a relation between the theory of isolated plane curve singularities\footnote{The reader is referred to \cite{Ghys17} for a beautiful and gentle introduction to the subject.}, as developed by V.I. Arnol'd and S. Gusein-Zade \cite{Arnold90,AGZV1,AGZV2,GuseinZade74}, N. A'Campo \cite{ACampo73,ACampo75,ACampo98,ACampo99}, J.W. Milnor \cite{Milnor68} and others, and arboreal Lagrangian skeleta of Weinstein 4-manifolds. In particular, we construct {\it closed} Lagrangian skeleta for the infinite class of Weinstein 4-manifolds obtained by attaching Weinstein 2-handles \cite{CieliebakEliashberg12,Weinstein91} to the link of an arbitrary isolated plane curve singularity $f:\C^2\lr\C$. These closed Lagrangian skeleta allow for an explicit computation of the moduli of microlocal sheaves \cite{GKS_Quantization,NadlerShende20,STWZ} and also explain the symplectic topology origin of the Fomin-Pylyavskyy-Shustin-Thurston cluster algebra \cite{Morsification_FPST} of an isolated singularity.
%%%%%%%%%%%%%%%%%%%%%%%%%%%%%%%%%%%%%%%%%%%%%%%%%%%%%%%%%%%%%%%%%%%%%%%%%%%%%%%%%%%%%%%%%%%%%%%%%%%%%%%%%%%%%%%%%%%%%%%%%%%%%%%%%%%%%%%
\subsection{Main Results}

The advent of Lagrangian skeleta and sheaf invariants have underscored the relevance of Legendrian knots in the study of symplectic 4-manifolds \cite{CasalsMurphy19,CieliebakEliashberg12,GPS3,STWZ,STZ}. The theory of arboreal singularities, as developed by D. Nadler \cite{Nadler15,Nadler17}, provides a local-to-global method for the computation of categories of microlocal sheaves \cite{NadlerShende20}. These invariants, in turn, yield results in terms of Fukaya categories \cite{GPS3,GPS1}. The existence of arboreal Lagrangian skeleta has been crystallized by L. Starkston \cite{Starkston} in the context of Weinstein 4-manifolds, where this article takes place.

Given a Weinstein 4-manifold $(W,\la_\st)$, it is presently a challenge to describe an associated arboreal Lagrangian skeleta $\L\sse W$. In particular, there is no general method for finding {\it closed} arboreal Lagrangian skeleta\footnote{That is, a compact arboreal Lagrangian skeleta $\L\sse(W\,\la)$ such that $\dd\L=0$.}, or deciding whether these exist. This manuscript explores this question by introducing a new type of closed arboreal Lagrangian skeleta for Legendrian links $\La\sse(\S^3,\xi_\st)$ which are maximal-tb representatives of the {\it link} of an isolated plane curve singularity $f\in\C[x,y]$. The discussion in this note unravels thanks to a geometric fact:

\begin{thm}\label{thm:main}
Let $f\in\C[x,y]$ be an isolated plane curve singularity and $\La_f\sse(\S^3,\xi_\st)$ its associated Legendrian link. The Weinstein pair $(\C^2,\La_f)$ admits the closed arboreal Lagrangian skeleton $\bL(\tilde{f})=M_{\tilde f}\cup\vartheta$, obtained by attaching the Lagrangian $\D^2$-thimbles $\vartheta$ of $\tilde{f}$ to the Milnor fiber $M_{\tilde f}$, for any real Morsification $\tilde{f}\in\R[x,y]$.\hfill$\Box$ 
\end{thm}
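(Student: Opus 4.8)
The plan is to exhibit $\bL(\tilde f)$ as the Lagrangian skeleton of a Weinstein structure on $\C^2$ adapted to the real Morsification $\tilde f$, and then to identify that Weinstein structure with the Weinstein pair $(\C^2,\La_f)$. First I would recall that a real Morsification $\tilde f\in\R[x,y]$ of $f$ is a deformation with only real nondegenerate critical points, all of Morse index summing correctly, whose complex zero fiber near the origin is isotopic to the Milnor fiber $M_f$; the A'Campo--Gusein-Zade picture identifies $M_{\tilde f}$ with a plumbing of annuli/Lagrangian spheres read off the real divide of $\tilde f$, and the link of $f$ is recovered as the Legendrian $\La_f$ at maximal Thurston--Bennequin invariant (this is where one invokes the earlier set-up identifying $\La_f$).

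Next, the key construction: view $\tilde f:\C^2\to\C$ (restricted to a Milnor ball) as a Lefschetz fibration with distinct critical values and regular fiber symplectomorphic to $M_{\tilde f}$. Choosing vanishing paths from a base point to all critical values produces Lagrangian thimbles $\vartheta$, and the standard Lefschetz-fibration skeleton is $M_{\tilde f}\cup\vartheta$: the regular fiber over the base point together with the thimbles sweeps out a deformation retract of the total space, which is the Milnor ball, i.e. $\C^2$. So I would first check that $\bL(\tilde f)=M_{\tilde f}\cup\vartheta$ is indeed a (closed, since we are over a Milnor ball and the boundary fiber carries $\La_f$) Lagrangian skeleton of $(\C^2,\La_f)$ as a Weinstein pair — the thimbles have their Legendrian boundary on the contact boundary, cut along $\La_f$, which realizes the pair structure. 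The arboreality is the content that needs care: one must verify that, for a generic real Morsification, the intersections among the fiber $M_{\tilde f}$ and the thimbles $\vartheta$ are modeled on the $A_n$/$D_n$ arboreal singularities of Nadler--Starkston; here A'Campo's normal-crossings divide, where the real curve has only nodes, is exactly what forces the local models to be arboreal, and Starkston's criterion (cited above) should apply after a small Hamiltonian perturbation.

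The main obstacle I anticipate is precisely the arboreal-ness and the passage from "abstract Lefschetz skeleton" to "arboreal Lagrangian skeleton in the sense required": one needs to arrange the vanishing cycles of the Morsification so that the thimbles meet the central fiber (and each other) cleanly, in the standard plumbing configuration dual to the A'Campo divide, and then invoke a local normal form theorem to see each intersection locus is an arboreal singularity. This requires: (i) that the real Morsification exists with the divide in normal-crossing position — classical (A'Campo, Gusein-Zade); (ii) that the corresponding vanishing cycles are the core curves of the plumbing, so the thimbles are "as disjoint as possible"; (iii) a Weinstein-homotopy / Hamiltonian-perturbation argument making the union literally arboreal. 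I would organize the proof as: (1) set up $\tilde f$ as a Lefschetz fibration on the Milnor ball with fiber $M_{\tilde f}$ and Legendrian $\La_f$ at infinity; (2) show $M_{\tilde f}\cup\vartheta$ is the Lefschetz skeleton, hence a closed Lagrangian skeleton of the Weinstein pair $(\C^2,\La_f)$; (3) use the A'Campo divide to pin down the combinatorics of the vanishing cycles; (4) perturb to arboreal normal form and conclude. Step (4), combined with the verification in step (3) that the A'Campo configuration really does produce only arboreal intersections, is where the real work lies.
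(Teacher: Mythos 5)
There is a genuine gap in your proposal, and it is at the very first load-bearing claim. You propose to use the Lefschetz fibration $\tilde f:\C^2\to\C$ directly and declare that "the standard Lefschetz-fibration skeleton is $M_{\tilde f}\cup\vartheta$," with $M_{\tilde f}$ the regular fiber. But the regular fiber of a Lefschetz fibration is a \emph{symplectic} submanifold of $\C^2$, not a Lagrangian one; it is therefore not isotropic and cannot appear as a piece of any Lagrangian skeleton. The correct Weinstein skeleton adapted to a Lefschetz fibration with 2-dimensional fiber $F$ is $\mathrm{Skel}(F)\cup(\text{thimbles})$, i.e.\ a \emph{graph} (the Weinstein spine of the fiber) union the 2-disks — a different 2-complex from $M_{\tilde f}\cup\vartheta$. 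So the identification of $M_{\tilde f}\cup\vartheta$ with a Lagrangian skeleton via the Lefschetz structure, which you treat as a "standard" fact to be checked, is not available, and the plan does not close.

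The paper takes a different and essential route. It does not use the Lefschetz fibration of $\tilde f$ at all; it works inside $T^*\R^2\cong\C^2$, where $\R^2$ is the real locus and simultaneously the zero section. The (perturbed) divide $D'\subset\R^2$ is lifted to a conical Lagrangian conormal $L(D')\subset T^*\R^2$, and A'Campo's algorithm identifies $\R^2\cup L(D')$ (after trimming) with a singular Lagrangian model of the \emph{topological} Milnor fiber; the vanishing cycles $\vartheta_{p_i},\vartheta_{q_j}$ are then curves in $\R^2$ bounding honest Lagrangian 2-disks in the zero section. The argument that this is the skeleton of the pair $(\C^2,\La(D'))$ is a direct Liouville-flow retraction: the Liouville field of $T^*\R^2$ vanishes along $\R^2$ and is tangent to the conical $L(D')$, so the attracting set of its inverse flow is exactly this union, and trimming the open ends of $\R^2$ yields the closed skeleton $L(D')\cup\bigcup\D^2_{p_i}\cup\bigcup\D^2_{q_j}$. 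Note also that in this picture the arboreality you flag as "the real work" essentially comes for free from genericity of the immersed divide and the conormal local models; what actually carries the proof is the realization of $M_{\tilde f}$ as a Lagrangian conormal inside $T^*\R^2$, which is the step your proposal is missing.
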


The two objects $\La_f$ and $\bL(\tilde{f})$ in the statement of Theorem \ref{thm:main} require an explanation, which will be given. We rigorously define the notion of a {\it Legendrian} link $\La_f\sse(\S^3,\xi_\st)$ associated to the germ $f\in\C[x,y]$ of an isolated curve singularity in Section \ref{sec:main}. Note that the smooth link of the singularity $f\in\C[x,y]$, as defined by J. Milnor \cite{Milnor68}, and canonically associated to $f$, is naturally a {\it transverse} link $T_f\sse(\S^3,\xi_\st)$ \cite{Etnyre05,Geiges08,Giroux02}. The Legendrian link $\La_f\sse(\S^3,\xi_\st)$ will be a maximal-tb Legendrian approximation of $T_f$. The notation $(\C^2,\La_f)$ refers to the Weinstein pair $(\C^2,\mathscr{R}(\La_f))$, where $\mathscr{R}(\La_f)\sse(\S^3,\xi_\st)$ is a small (Weinstein) annular ribbon for the Legendrian link $\La_f$.

The Lagrangian skeleton $\bL(\tilde{f})$ is also defined in Section \ref{sec:main}. Note that the Milnor fibration of $f\in\C[x,y]$ is a {\it symplectic} fibration on $(\C^2,\omega_\st)$, whose symplectic fibers bound the transverse link $T_f\sse(\S^3,\xi_\st)$. Nevertheless, the Lagrangian skeleton $\bL(\tilde{f})$ is built from the underlying {\it topological} Milnor fiber and the vanishing cycles associated to a real Morsification. Indeed, $\bL(\tilde{f})$ is obtained by attaching the Lagrangian thimbles of the morsification $\tilde{f}$ to the (topological) Milnor fiber, which is Lagrangian in $\L(\wt f)$. Theorem \ref{thm:main} is a {\it relative} statement, being about a Weinstein {\it pair} $(\C^2,\La_f)$ and not just about a Weinstein manifold. Hence, it is useful in the {\it absolute} context, as follows.

Consider a Legendrian knot $\La\sse(\S^3,\xi_\st)$ in the standard contact 3-sphere and the Weinstein 4-manifold $W(\La)=\bD^4\cup_\La T^*\bD^2$ obtained by performing a 2-handle attachment along $\La$. A front projection for $\La$ (almost) provides an arboreal skeleton for the Weinstein 4-manifold $W(\La)$ \cite{Starkston}. Nevertheless, the computation of microlocal sheaf invariants from this model is far from immediate, nor exhibits the cluster nature of the moduli space of Lagrangian fillings. The symplectic topology of a Weinstein manifold is much more visible, and invariants more readily computed, from a {\it closed} arboreal Lagrangian skeleton, i.e. an arboreal Lagrangian skeleton which is compact and without boundary. In particular, Theorem \ref{thm:main} provides such a closed Lagrangian skeleton associated to a real Morsification:

\begin{cor}\label{cor:main} Let $f\in\C[x,y]$ be an isolated curve singularity and $\La_f$ its associated Legendrian link. The 4-dimensional Weinstein manifold $W(\La_f)=\bD^4\cup_{\La_f} (T^*\bD^2\cup\stackrel{\pi_0(\La_f)}{\ldots}\cup T^*\bD^2))$ admits the closed arboreal Lagrangian skeleton ${\bL}(\tilde{f})\cup_\dd (\bD^2\cup\stackrel{\pi_0(\La_f)}{\ldots}\cup\bD^2)$, obtained by attaching the Lagrangian $\D^2$-thimbles of $\tilde{f}$ to the compactified Milnor fiber $\overline{M}_f=M_f\cup_\dd(\bD^2\cup\stackrel{\pi_0(\dd M_f)}{\ldots}\cup\bD^2)$, for any real Morsification $\tilde{f}\in\R[x,y]$.\hfill$\Box$ 
\end{cor}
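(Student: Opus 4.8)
The plan is to deduce this absolute statement from the relative Theorem~\ref{thm:main} by tracking Lagrangian skeleta through the Weinstein $2$-handle attachment defining $W(\La_f)$. The first point is that attaching the Weinstein $2$-handles along the components of $\La_f$ is, by construction \cite{CieliebakEliashberg12,Weinstein91,Starkston}, the gluing of the Weinstein pair $(\C^2,\La_f)$ with the disjoint union of basic pairs $(T^*\bD^2,\La_0)$, one for each component of $\La_f$, along a standard contact neighborhood of the common Weinstein hypersurface $\mathscr{R}(\La_f)=\bigsqcup_iT^*\S^1\cong\bigsqcup_i\mathscr{R}(\La_0)$; here $\La_0\sse\dd(T^*\bD^2)$ is the max-tb Legendrian unknot bounding the zero-section, such neighborhoods are all pieces of the $1$-jet space $J^1\S^1$, and the framing used is the Thurston--Bennequin framing. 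The relevant relative skeleton of the basic pair $(T^*\bD^2,\La_0)$ is its zero-section $\bD^2$, whose boundary at infinity is the skeleton $\S^1\sse T^*\S^1$ of $\mathscr{R}(\La_0)$. Thus
\[
W(\La_f)=\C^2\cup_{\mathscr{R}(\La_f)}\bigl(T^*\bD^2\cup\stackrel{\pi_0(\La_f)}{\ldots}\cup T^*\bD^2\bigr)
\]
is exhibited as a gluing of Weinstein pairs along $\mathscr{R}(\La_f)$.

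Second, I would invoke the standard behavior of skeleta under such gluings \cite{Starkston}: the gluing of two Weinstein pairs $(X_1,\mathfrak L)$ and $(X_2,\mathfrak L)$ along a common Weinstein hypersurface $\mathfrak L$ is a Weinstein \emph{manifold}, with no stop at infinity, whose skeleton is the union of the two relative skeleta glued along the skeleton of $\mathfrak L$; moreover this union is arboreal whenever both relative skeleta are arboreal and the gluing is along a smooth common isotropic locus. Applying this with $X_1=\C^2$, $\mathfrak L=\mathscr{R}(\La_f)$ and $X_2=\bigsqcup_iT^*\bD^2$: by Theorem~\ref{thm:main} the relative skeleton of $(\C^2,\La_f)$ is $\bL(\tilde f)=M_{\tilde f}\cup\vartheta$, whose boundary at infinity is precisely the skeleton of $\mathscr{R}(\La_f)$, i.e.\ the $\pi_0(\dd M_f)$ boundary circles $\dd M_f$ of the Milnor fiber, regarded as the zero-sections of the annular ribbons; the relative skeleton of $X_2$ is the disjoint union of the zero-sections $\bD^2$, whose boundaries at infinity are these same circles. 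The gluing therefore caps off each boundary circle of $\bL(\tilde f)$ by a Lagrangian disk, producing
\[
\mathrm{sk}\bigl(W(\La_f)\bigr)=\bigl(M_{\tilde f}\cup\vartheta\bigr)\cup_\dd\bigl(\bD^2\cup\stackrel{\pi_0(\La_f)}{\ldots}\cup\bD^2\bigr)=\overline{M}_f\cup\vartheta,
\]
where $\overline{M}_f=M_f\cup_\dd(\bD^2\cup\ldots\cup\bD^2)$ is the compactified Milnor fiber and the thimbles $\vartheta$ of $\tilde f$ are attached along their vanishing cycles, which lie in $\Int M_f\sse\overline{M}_f$. Since $\overline{M}_f$ is closed and the thimbles are compact disks, this skeleton is compact and without boundary.

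The step deserving the most care is the arboreality of $\overline{M}_f\cup\vartheta$, which reduces to two local verifications. First, one must pin down that the $2$-handle attachment along a component of $\La_f$ is modeled, near the attaching region, on the pair gluing with $(T^*\bD^2,\La_0)$ with the Thurston--Bennequin framing --- equivalently, that the standard contact neighborhoods of the component in $\S^3$ and of $\La_0$ in $\dd(T^*\bD^2)$ are identified, as pieces of $J^1\S^1$, compatibly with their ribbons; this is classical but should be stated so as to match the conventions by which $\La_f$ and $\bL(\tilde f)$ are set up in Section~\ref{sec:main}. Second, one must check that gluing the smooth Lagrangian disks to $M_{\tilde f}$ along $\dd M_{\tilde f}$ creates no new singularities: in a neighborhood of $\dd M_{\tilde f}$ the skeleton $\bL(\tilde f)$ coincides with the smooth surface-with-boundary $M_{\tilde f}$, since the thimbles meet $M_{\tilde f}$ only along the vanishing cycles and these are disjoint from $\dd M_{\tilde f}$, so $M_{\tilde f}\cup_\dd\bD^2$ is locally a smooth embedded Lagrangian surface. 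Granting these, $\overline{M}_f\cup\vartheta$ carries exactly the arboreal singularities already present in $\bL(\tilde f)$ --- which are arboreal by Theorem~\ref{thm:main} --- and no others; together with the skeleton-gluing formalism, which also supplies the deformation retraction of $W(\La_f)$ onto $\overline{M}_f\cup\vartheta$, this completes the argument.
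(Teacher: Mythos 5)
Your proposal is correct and takes essentially the same route as the paper: the paper treats the corollary as immediate from Theorem~\ref{thm:main} (the statement closes with $\Box$ and no separate argument), and your deduction via the gluing of the Weinstein pair $(\C^2,\La_f)$ with the handle pairs $(T^*\bD^2,\La_0)$ along $\mathscr{R}(\La_f)$, capping each boundary circle of $M_{\tilde f}$ by the handle core, is exactly how that immediacy is made rigorous. Your extra care on the Thurston--Bennequin framing and on the local smoothness near $\dd M_{\tilde f}$ (so that no new, potentially non-arboreal, singularities are introduced) is sound and matches the intended reading.
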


Let us see how Theorem \ref{thm:main} and Corollary \ref{cor:main} can be applied for two simple singularities, corresponding to the $D_5$ and the $E_6$ Dynkin diagrams. As we will see, part of the strength of these results is the explicit nature of the resulting Lagrangian skeleta and the direct bridge they establish between the theory of singularities and symplectic topology.

\begin{center}
	\begin{figure}[h!]
		\centering
		\includegraphics[scale=0.7]{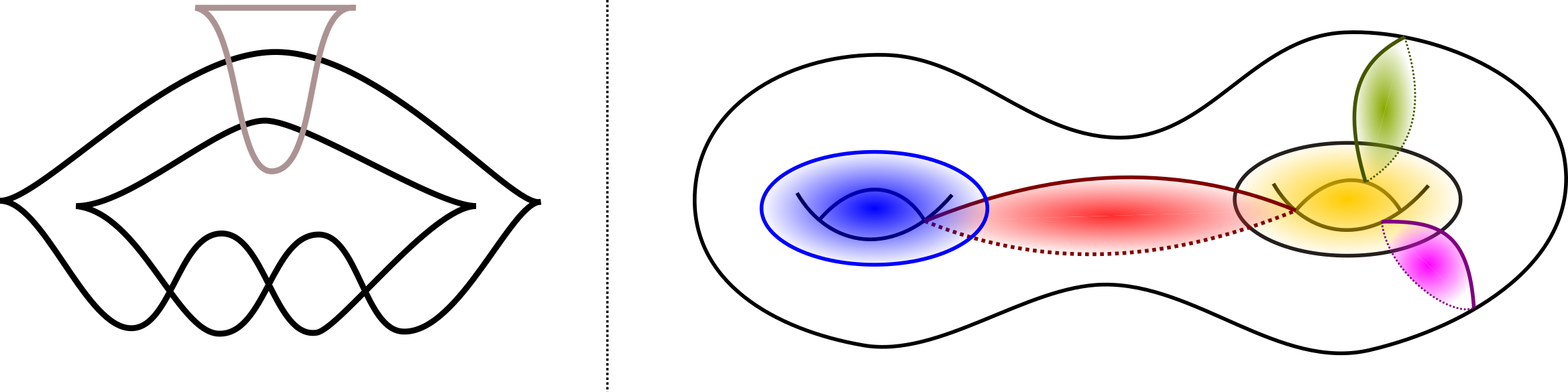}
		\caption{The $D_5$-Legendrian link $\La_f\sse(\S^3,\xi_\st)$ (Left) and a closed Lagrangian arboreal skeleton for the Weinstein 4-manifold $W(\La_f)$ (Right), obtained by attaching $5$ Lagrangian 2-disks to the cotangent bundle $(T^*\Sigma_2,\la_\st)$.}
		\label{fig:D5Example}
	\end{figure}
\end{center}

\begin{example}\label{ex:intro1} 
$($i$)$ First, consider the germ of the $D_5$-singularity $f(x,y)=xy^2+x^4$, the Legendrian link associated to this singularity is depicted in Figure \ref{fig:D5Example} (Left). The Weinstein 4-manifold $W(\La_f)=\bD^4\cup_{\La_f} (T^*\bD^2\cup T^*\bD^2)$ admits the closed arboreal Lagrangian skeleton depicted in Figure \ref{fig:D5Example} (Right). The $D_5$-Dynkin diagram is readily seen in the intersection quiver of the boundaries of the Lagrangian 2-disks added to the (smooth compactification) of the genus 2 Milnor fiber.

\begin{center}
	\begin{figure}[h!]
		\centering
		\includegraphics[scale=0.7]{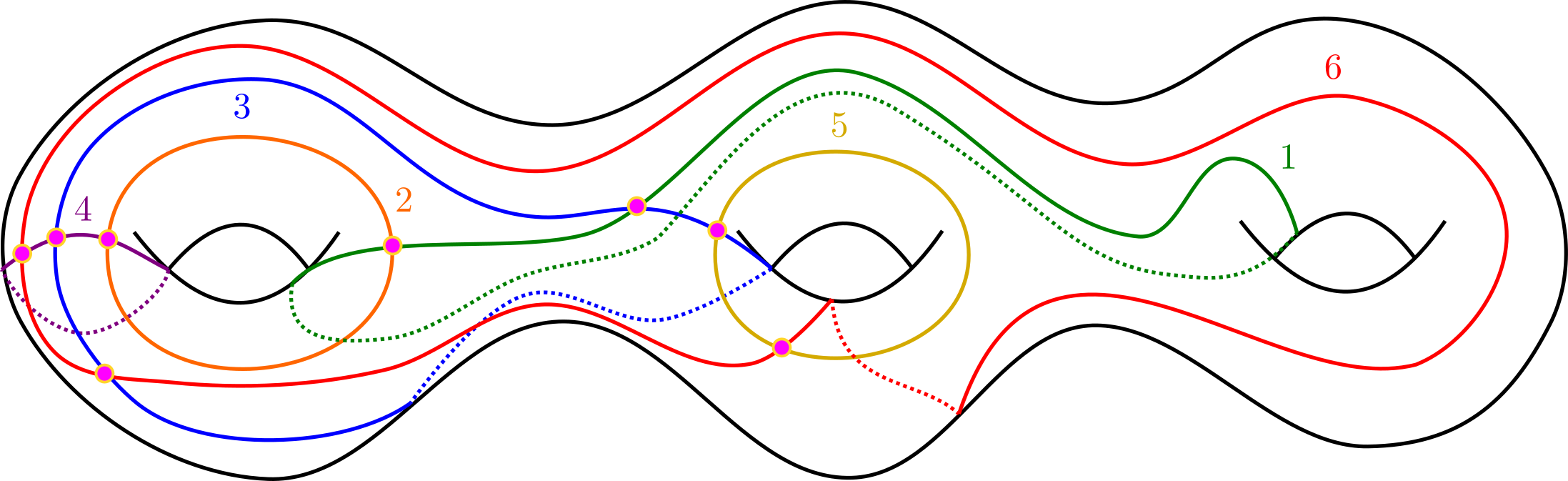}
		\caption{Closed Lagrangian arboreal skeleton associated to the $E_6$-simple singularity $f(x,y)=x^3+y^4$, according to Corollary \ref{cor:main}.}
		\label{fig:ExampleIntro}
	\end{figure}
\end{center}
	
$($ii$)$ Second, consider the germ of the singularity $f(x,y)=x^3+y^4$, the link of the singularity is the maximal-tb positive torus knot $\La_f\cong\La(3,4)\sse(\S^3,\xi_\st)$. The Weinstein 4-manifold $W(\La_f)=\bD^4\cup_{\La_f} T^*\bD^2$ admits the closed arboreal Lagrangian skeleton depicted in Figure \ref{fig:ExampleIntro}. This Lagrangian skeleton is built by attaching six Lagrangian 2-disks to the cotangent bundle $(T^*\Sigma_3,\la_\st)$ of a genus 3 surface. These 2-disks are attached along the six curves in Figure \ref{fig:ExampleIntro}, whose intersection pattern is $($mutation equivalent to$)$ the $E_6$ Dynkin diagram.\hfill$\Box$
\end{example}

From now onward, we abbreviate ``closed arboreal Lagrangian skeleton'' to {\it Cal}-skeleton.\footnote{This seems appropriate, as D. Nadler (UC Berkeley), L. Starkston (UC Davis) and Y. Eliashberg (Stanford), the initial developers of arboreal Lagrangian skeleta, hold their positions in the state of California.} Let $(W,\la)$ be a Weinstein 4-manifold, e.g. described by a Legendrian handlebody, a Lefschetz fibration or analytic equations in $\C^n$. There are two basic nested questions: Does it admit a Cal-skeleton ? If so, how do you find one ? For instance, consider a max-tb Legendrian representative $\La\sse(\dd\D^4,\la_\st)$ of any smooth knot, does $W(\La)$ admit a Cal-skeleton ? It might be that not all these Weinstein 4-manifolds $W(\La)$ admit such a skeleton: it is certainly not the case if the Legendrian knot $\La$ were stabilized, hence the max-tb hypothesis. In general, the lack of exact Lagrangians in $W(\La)$ would provide an obstruction.

\begin{remark}
For simplicity, we focus on {\it oriented} exact Lagrangians. Non-orientable Cal-skeleta should also be of interest. For instance, consider the max-tb Legendrian {\it left}-handed trefoil knot $\La(\overline{3}_1)\sse(\dd\D^4,\la_\st)$. Then $W(\La(\overline{3}_1))$ admits a Cal-skeleton $\R\P^2\cup_{\S^1}\D^2$ given by attaching a Lagrangian 2-disk to a Lagrangian $\R\P^2$, as shown in Figure \ref{fig:CalSkel_LeftHanded}.\hfill$\Box$
\end{remark}

\begin{center}
	\begin{figure}[h!]
		\centering
		\includegraphics[scale=0.8]{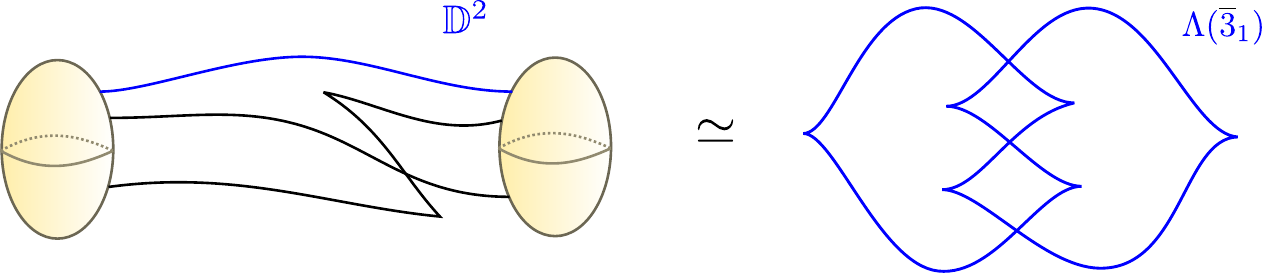}
		\caption{Cal-skeleton $\R\P^2\cup_{\S^1}\D^2$ associated to $\La(\overline{3}_1)\sse(\dd\D^4,\la_\st)$.}
		\label{fig:CalSkel_LeftHanded}
	\end{figure}
\end{center}

Symplectic invariants of Weinstein 4-manifolds $W$ include (partially) wrapped Fukaya categories \cite{Auroux14,Sylvan19_PartiallyWrapped} and categories of microlocal sheaves \cite{NadlerShende20}. Microlocal sheaf invariants should be particularly computable if a Cal-skeleton $\L\sse W$ is given, yet worked out examples are scarce in the literature. In Section \ref{sec:computations}, we use\footnote{The correspondence \cite[Theorem 1.3]{NRSSZ} and T. K\'alm\'an's description \cite{Kalman} of augmentation varieties $\Aug(\La)$ are also useful tools in this context.} Theorem \ref{thm:main} to compute the moduli space of simple microlocal sheaves on some of the Cal-skeleta $\L$ from Corollary \ref{cor:main}.

Finally, Theorem \ref{thm:main} provides a context for the study of exact Lagrangian fillings of Legendrian links $\La_f\sse(\S^3,\xi_\st)$ associated to isolated plane curve singularities. Indeed, let $\bL(f)=M_f\cup\vartheta$ be a Cal-skeleton for the Weinstein pair $(\C^2,\La_f)$, as produced in Theorem \ref{thm:main}. The topological Milnor fiber $M_f$ may serve as a marked exact Lagrangian filling for the Legendrian link $\La_f$, and performing Lagrangian disk surgeries \cite{STW_Combinatorics,Yau17_Surgery} along the Lagrangian thimbles in $\vartheta$ is a method to construct additional\footnote{Potentially not Hamiltonian isotopic.} exact Lagrangian fillings. In general, this strategy might be potentially obstructed, as the Lagrangian disks might acquire immersed boundaries when the Lagrangian surgeries are performed. That said, since Lagrangian disks surgeries yield combinatorial mutations of a quiver, Theorem \ref{thm:main} might hint towards a structural conjecture: we expect as many exact Lagrangian fillings $\La_f$ as elements in the cluster mutation class of the intersection quiver for the vanishing thimbles $\vartheta$. Section \ref{sec:conj} concludes with a discussion on such conjectural matters.\\

{\bf Acknowledgements}: The author thanks A. Keating for many conversations on divides of singularities throughout the years. The author is supported by the NSF grant DMS-1841913, a BBVA Research Fellowship and the Alfred P. Sloan Foundation.
\section{Lagrangian Skeleta for Isolated Singularities}\label{sec:main}

In this section we introduce the necessary ingredients for Theorem \ref{thm:main} and prove it. We refer the reader to \cite{AGZV1,Ghys17,Milnor65} for the basics of plane curve singularities and \cite{Etnyre03,Etnyre05,Geiges08,OzbagciStipsicz04} for background on 3-dimensional contact topology.

%%%%%%%%%%%%%%%%%%%%%%%%%%%%%%%%%%%%%%%%%%%%%%%%%%%%%%%%%%%%%%%%%%%%%%%%%%%%%%%%%%%%%%%%%%%%%%%%%%%%%%%%%%%%%%%%%%%%%%%%%%%%%%%%%%%%%%%
\subsection{The Legendrian Link of an Isolated Singularity}\label{ssec:LegOfSing}

Let $f\in\C[x,y]$ be a bivariate complex polynomial with an isolated complex singularity at the origin $(x,y)=(0,0)\in\C^2$. The {\it link of the singularity} $T_f\sse(\S^3,\xi_\st)$ is the intersection
$$T_f=V(f)\cap \S^3_\varepsilon=\{(x,y)\in\C^2:f(x,y)=0\}\cap\{(x,y)\in\C^2:|x|^2+|y|^2=\varepsilon\},$$
where $\varepsilon\in\R^+$ is small enough. The intersection is transverse for $\varepsilon\in\R^+$ small enough \cite{EisenbudNeumann85,Milnor68}, and thus $T_f$ is a smooth link. The link $T_f$ is in fact a transverse link for the contact structure $\xi_\st=T\S^3\cap i(T\S^3)$, as is the boundary of the (Milnor) fiber $M_f$ for the Milnor fibration \cite{Geiges08,Giroux02}. Equivalently, it is the transverse binding of the contact open book generated by
$$\frac{f}{\|f\|}:\S^3\setminus T_f\lr \S^1.$$
%Brauner: Zur Geometrie (1928)
The link of a singularity was first introduced by W. Wirtinger and K. Brauner \cite{Brauner28} and masterfully studied by J. Milnor \cite{Milnor68}. The book \cite{EisenbudNeumann85} comprehensively develops\footnote{See also W. Neumann's article in E. K\"ahler's volume \cite{KahlerNeumann03}.} the smooth topology of link of singularities and their connection to 3-manifold topology. The contact topological nature of the associated open book was developed by E. Giroux \cite{Giroux02}.

From a smooth perspective, the smooth isotopy class of $T_f$ is that of an iterated cable of the unknot \cite{EisenbudNeumann85}. Let $K_{l,m}$ be the oriented $(l,m)$-cable of a smooth link $K\sse\S^3$, i.e. an embedded curve in the boundary $\dd\Op(K)$ of the solid torus $\Op(K)$ in the homology class $l\cdot[\la]+m\cdot[\mu]$, with $\la$ the longitude and $\mu$ the meridian of $\Op(K)$. It is shown in \cite[Chapter IV.7]{EisenbudNeumann85} that an iterated cable $K_{(l_1,m_1),(l_2,m_2),\ldots,(l_r,m_r)}\sse\S^3$ is the link of an isolated singularity if and only if $m_{i+1}>(l_im_i)l_{i+1}$, for $1\leq i\leq r-1$.

\begin{remark}\label{rmk:NewtonPuiseux} Given an isolated singularity $f(x,y)$, there are algorithms for determining the smooth type of $T_f$, i.e. the sequence of pairs $\{(l_1,m_1),(l_2,m_2),\ldots,(l_r,m_r)\}$. For instance, by applying the Newton-Puiseux algorithm to $f(x,y)$ we may write
$$y=a_1x^{\frac{n_1}{m_1}}+a_2x^{\frac{n_2}{m_1m_2}}+a_3x^{\frac{n_3}{m_1m_2m_3}}+\ldots,$$
at each branch; the pairs $(n_i,m_i)$ are called the Puiseux pairs. Then the cable pairs $(l_i,m_i)$ are given by $l_i=n_i-n_{i-1}m_i+m_{i-1}n_{i-1}n_i$. The algorithm is explained in \cite[Appendix to Chapter I]{EisenbudNeumann85}.\hfill$\Box$
\end{remark}

In the finer context of contact topology, the transverse link $T_f\sse(\S^3,\xi_\st)$ is an iterated cable with maximal self-linking number $sl(T_f)=\overline{sl}$, as it bounds the symplectic Milnor fiber $M_f\sse\C^2$ of $f\in\C[x,y]$, equiv.~the symplectic page of the contact open book \cite{Etnyre06,Giroux02}. By the transverse Bennequin bound \cite{Bennequin83}, this self-linking must be equal to the Euler characteristc $-\chi(M_f)$. A fact about the smooth isotopy class of links of singularities is their Legendrian simplicity:

\begin{prop}\label{prop:unique}
Let $f\in\C[x,y]$ be an isolated singularity and $T_f\sse(\S^3,\xi_\st)$. There exists a unique maximal Thurston-Bennequin Legendrian approximation $\La_f\sse(\S^3,\xi_\st)$ of the transverse link $T_f$.
\end{prop}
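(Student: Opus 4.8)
The plan is to leverage the known classification of Legendrian representatives within the smooth isotopy class of an algebraic link, together with the cabling structure recalled just above. First I would recall that, by Milnor's theory \cite{Milnor68,EisenbudNeumann85}, the transverse link $T_f\sse(\S^3,\xi_\st)$ is an iterated torus link, built from the unknot by a sequence of cabling operations with positive cable parameters satisfying the inequalities $m_{i+1}>(l_im_i)l_{i+1}$. The key input is the fact, due to Etnyre--Honda on cables and its subsequent refinements, that the Legendrian (and transverse) classification of iterated torus links assembled from \emph{positive} cables of the unknot is simple: within such a smooth link type there is a unique transverse representative maximizing the self-linking number, and its maximal-$tb$ Legendrian approximations form a single Legendrian isotopy class. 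Thus my first step is to identify $T_f$'s smooth type as exactly such an iterated positive cable (via the Newton--Puiseux data of Remark \ref{rmk:NewtonPuiseux}), and then to invoke this classification.

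The second step is to pin down which transverse representative $T_f$ actually \emph{is}. Because $T_f$ bounds the symplectic Milnor fiber $M_f\sse(\C^2,\omega_\st)$ as the page of the Milnor open book, the transverse Bennequin bound \cite{Bennequin83} forces $sl(T_f)=-\chi(M_f)=\overline{sl}$, i.e.\ $T_f$ is the \emph{maximal} self-linking transverse representative in its smooth class. This is already recorded in the text preceding the statement. So $T_f$ is the unique maximal-$sl$ transverse link in an iterated-positive-cable smooth type. Combining with the classification from step one, the set of maximal-$tb$ Legendrian approximations of $T_f$ — equivalently, the Legendrians whose transverse pushoff is $T_f$ and which maximize $tb$ among Legendrians in this smooth type — consists of a single Legendrian isotopy class. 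I would make explicit here that ``max-$tb$ Legendrian approximation of $T_f$'' means a Legendrian $\La_f$ with transverse pushoff transversely isotopic to $T_f$ and with $tb(\La_f)=\overline{tb}$ realizing the Bennequin bound (so that the transverse Bennequin and Legendrian Bennequin bounds are compatible and $sl$ of the pushoff equals $tb+r$ appropriately).

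The main obstacle is the classification input itself: one needs that the Legendrian/transverse simplicity of iterated positive torus cables genuinely holds for the full iterated cabling depth that occurs for arbitrary isolated plane curve singularities, not just for a single cabling step. The cleanest route is an induction on the number of cabling operations $r$: the base case is the unknot (Eliashberg), and the inductive step uses the Etnyre--Honda classification of Legendrian cables of a Legendrian link together with the constraint $m_{i+1}>(l_im_i)l_{i+1}$, which guarantees the cable is ``sufficiently positive'' so that the uniqueness statement propagates — the relevant cables sit in the regime where the cabled Legendrian is determined by the cabled-from Legendrian and the cable slope, with no stabilization choices surviving at maximal $tb$. I would cite the precise cabling classification and check that the Newton--Puiseux inequalities land in the hypotheses of that theorem; this compatibility check is the real content. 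A secondary, more routine point is to confirm that the convention ``Legendrian approximation of a transverse link'' interacts correctly with this classification (transverse pushoff is well-defined up to transverse isotopy, and negative stabilization of the Legendrian does not change the transverse pushoff but lowers $tb$), so that fixing $T_f$ and demanding maximal $tb$ removes exactly the stabilization ambiguity. With these in place the proposition follows.
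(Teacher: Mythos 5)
Your proposal follows essentially the same route as the paper: identify $T_f$ as an iterated positive cable of the unknot, invoke the Legendrian simplicity classification for such cables (the paper cites \cite[Corollary 1.6]{LaFountain1}, building on \cite{EtnyreHonda05,EtnyreLafountainTosun12}), and check that the cabling inequalities for algebraic links land in the ``sufficiently positive'' regime of that classification. The one thing you flag as remaining — the compatibility check — is exactly what the paper's proof carries out, via the chain $m_{i+1}/l_{i+1} > m_i l_i \geq \overline{tb}(K_i)$ with $\overline{tb}(K_i)=A_i-B_i$ from \cite[Equation (2)]{LaFountain1}, so your outline is correct and complete once that short numerical verification is inserted.
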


\begin{proof} The classification of Legendrian representatives of iterated cables of positive torus knots is established in \cite[Corollary 1.6]{LaFountain1}, building on \cite{EtnyreHonda05,EtnyreLafountainTosun12}. The sufficent numerical condition for Legendrian simplicity is $m_{i+1}/l_{i+1}>\overline{tb}(K_i)$, where $K_i$ is the $i$th iterated cable in $K_{(l_1,m_1),(l_2,m_2),\ldots,(l_r,m_r)}\sse\S^3$. The maximal Thurston-Bennequin equals $\overline{tb}(K_i)=A_r-B_r$, where $A_r,B_r\in\N$ are defined in \cite[Equation (2)]{LaFountain1}, and satisfy $m_il_i>A_i-B_i$. In particular, an algebraic link satisfies $m_{i+1}/l_{i+1}>m_il_i\geq\overline{tb}(K_i)$, for all $1\leq i\leq r-1$, and its max-tb representative is unique.
\end{proof}

Proposition \ref{prop:unique} implies that there exists a {\it unique} Legendrian link $\La_f\sse(\S^3,\xi_\st)$, up to contact isotopy, whose positive transverse push-off $\tau(\La_f)$, as defined in \cite[Section 3.5.3]{Geiges08}, is transverse isotopic to the transverse link $T_f$. Note that two distinct Legendrian approximations of a transverse link \cite[Theorem 2.1]{EpsteinFuchsMeyer01} differ by Legendrian stabilizations, which necessarily decrease the Thurston-Bennequin invariant.

\begin{remark}
Proposition \ref{prop:unique} does not hold for $K\sse(\S^3,\xi_\st)$ an arbitrary smooth link. For instance, the smooth isotopy classes of the mirrors $\overline{5}_2,\overline{6}_1$ of the three-twist knot and the Stevedore knot admit {\it two} distinct maximal-tb Legendrian representatives each \cite[Section 4]{Ng_Atlas}. That said, the knots $\overline{5}_2,\overline{6}_1$ are not links of singularities, as their Alexander polynomials are not monic, and thus they are not fibered knots \cite{Neuwirth75}.\hfill$\Box$
\end{remark}

Proposition \ref{prop:unique} allows us to canonically define a {\it Legendrian} link associated to an isolated singularity:

\begin{definition}\label{def:legendrian} A Legendrian link $\La_f\sse(\S^3,\xi_\st)$ is associated to an isolated singularity $f\in\C[x,y]$ if it is a maximal-tb Legendrian link $\La_f\sse(\S^3,\xi_\st)$ whose positive transverse push-off $\tau(\La_f)$ is transversely isotopic to the link of the singularity $T_f\sse(\S^3,\xi_\st)$.\hfill$\Box$
\end{definition}

Proposition \ref{prop:unique} shows that the Legendrian isotopy class of a Legendrian link $\La_f\sse(\S^3,\xi_\st)$ associated to an isolated singularity $f\in\C[x,y]$ is unique. Thus, we refer to $\La_f\sse(\S^3,\xi_\st)$ in Definition \ref{def:legendrian} as {\it the} Legendrian link associated to the isolated singularity $f\in\C[x,y]$.

\begin{example}[ADE Singularities]\label{ex:ADESingularities}
Let us consider the three ADE families of simple isolated singularities \cite[Chapter 2.5]{AGLV98}. Their germs are given by
$$(A_n)\quad f(x,y)=x^{n+1}+y^2,\qquad (D_n)\quad f(x,y)=xy^2+x^{n-1},\qquad n\in\N,$$
$$(E_6)\quad f(x,y)=x^3+y^4,\qquad(E_7)\quad f(x,y)=x^3+xy^3,\qquad(E_8)\quad f(x,y)=x^3+y^5.$$
\begin{center}
	\begin{figure}[h!]
		\centering
		\includegraphics[scale=0.8]{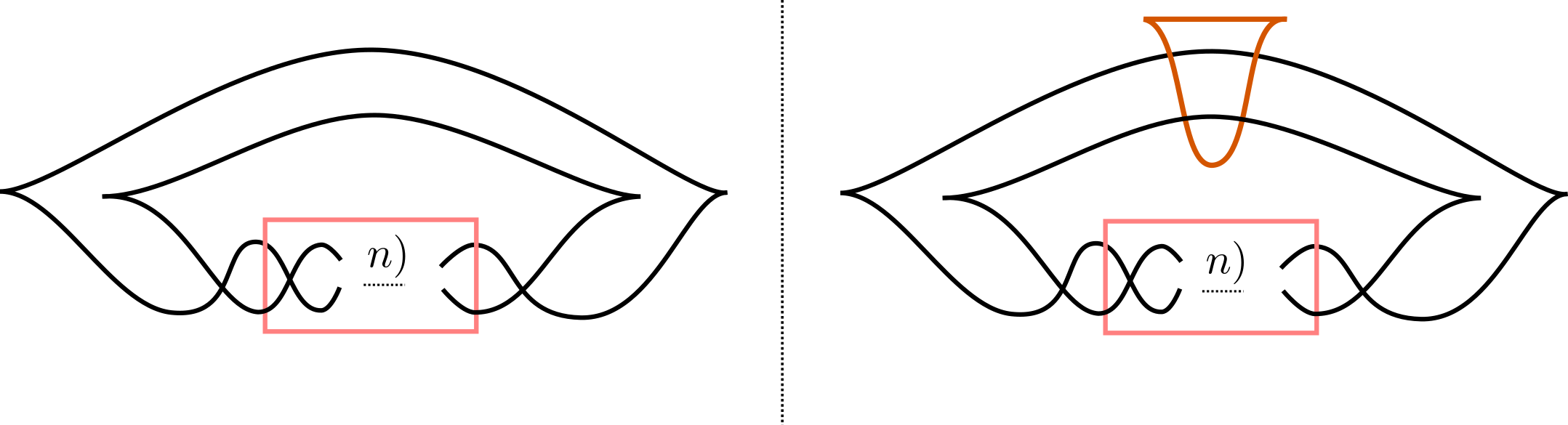}
		\caption{The Legendrian link for the $A_n$-singularity is the max-tb $(2,n+1)$-torus link (Left). The Legendrian link for the $D_n$-singularity is the link given by the union of a max-tb $(2,n-2)$-torus link and a standard Legendrian unknot, in orange, linked as in the Legendrian front on the right (Right).}
		\label{fig:ADSingularities}
	\end{figure}
\end{center}

The Legendrian link associated to the $A_n$-singularity is the positive $(2,n+1)$-torus link, with $\overline{tb}=n-1$. These links are associated to the braid $\sigma_1^{n+1}$, as depicted in Figure \ref{fig:ADSingularities} (Left). The Legendrian link associated to the $D_n$-singularity is the link consisting of the link associated to the $A_{n-3}$-singularity and the standard Legendrian unknot, linked as in Figure \ref{fig:ADSingularities} (Right). This is the topological consequence of the factorization $f(x,y)=x(y^2+x^{n-2})$. These $D_n$-links are associated to the (rainbow closure of the) positive braid $\sigma_1^{n-2}\sigma_2\sigma_1^2\sigma_2$, $n\geq3$. The $D_2$-link is the three-copy Reeb push-off of the Legendrian unknot, and the $D_3$-link is Legendrian isotopic to the $A_3$-link, i.e. a max-tb positive $T(2,4)$-torus link.

\begin{center}
	\begin{figure}[h!]
		\centering
		\includegraphics[scale=0.8]{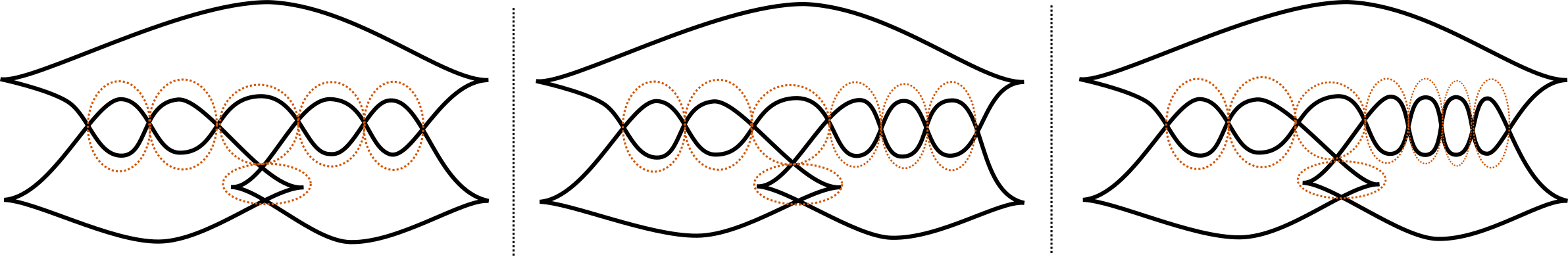}
		\caption{The Legendrian links for the $E_6,E_7$ and $E_8$ simple singularities.}
		\label{fig:ESingularities}
	\end{figure}
\end{center}

The Legendrian links associated to the $E_6$ and $E_8$ singularities are the maximal-tb positive $(3,4)$-torus Legendrian link and the Legendrian $(3,5)$-torus link, as depicted in Figure \ref{fig:ESingularities}. The $E_7$ is a maximal-tb Legendrian link consisting of a trefoil knot and a standard Legendrian unknot, linked as in the center Legendrian front in Figure \ref{fig:ESingularities}. This is implied by the $f(x,y)=x(x^2+y^3)$ factorization of the $E_7$ singularity. The Legendrian links for $E_6,E_7$ and $E_8$ can also be obtained as the closure of the three braids $\sigma_1^{n-3}\sigma_2\sigma_1^3\sigma_2$, $n=6,7,8$. Figure \ref{fig:ESingularities} also depicts generators of the first homology group of the minimal genus Seifert surface; these generate the first homology of each Milnor fiber, and the $E_6,E_7$ and $E_8$ Dynkin diagrams are readily exhibited from their intersection pattern.\hfill$\Box$
\end{example}

The singularities $f(x,y)=x^a+y^b$, $a\geq3,b\geq6$, or $(a,b)=(4,4),(4,5)$, yield an infinite family of non-simple isolated singularities for which the associated Legendrian is readily computed to be the maximal-tb positive $(a,b)$-torus link, confer Remark \ref{rmk:NewtonPuiseux}. Two more instances are illustrated in the following:

\begin{example} $($Two Iterated Cables$)$ Consider the isolated curve singularity
$$g(x,y)=x^7-x^6+4x^5y+2x^3y^2-y^4.$$
The Puiseux expansion yields the Newton solution $y=x^{3/2}(1+x^{1/4})$ and thus $\La_f\sse(\S^3,\xi_\st)$ is the maximal-tb Legendrian representative of the $(2,13)$-cable of the trefoil knot. This Legendrian knot is depicted in Figure \ref{fig:Cable23213} $($Left$)$. The reader is invited to show that the Legendrian knot $\La_f\sse(\S^3,\xi_\st)$ of the singularity
$$h(x,y)=x^9-x^{10}+6x^8y-3x^6y^2+2x^5y^3+3x^3y^4-y^6,$$
is the maximal-tb Legendrian representative of the $(3,19)$-cable of the trefoil knot \cite{Ghys17}, as depicted in Figure \ref{fig:Cable23213} $($Right$)$. $($For that, start by writing the relation as $y(x)=x^{3/2}+x^{5/3}$.$)$\hfill$\Box$
%http://ghys.perso.math.cnrs.fr/bricabrac/promenade.pdf
	\begin{center}
	\begin{figure}[h!]
		\centering
		\includegraphics[scale=0.8]{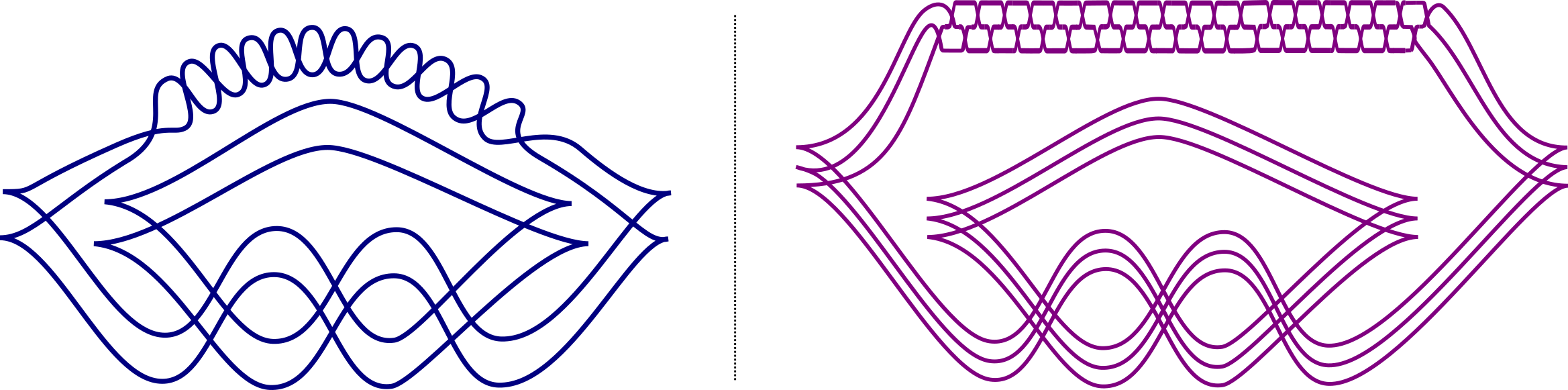}
		\caption{The Legendrian links $\La_g$ and $\La_h$ associated to the singularity $g(x,y)=x^7-x^6+4x^5y+2x^3y^2-y^4$, on the left, and the singularity $h(x,y)=x^9-x^{10}+6x^8y-3x^6y^2+2x^5y^3+3x^3y^4-y^6$, on the right.}
		\label{fig:Cable23213}
	\end{figure}
\end{center}
\end{example}

%%%%%%%%%%%%%%%%%%%%%%%%%%%%%%%%%%%%%%%%%%%%%%%%%%%%%%%%%%%%%%%%%%%%%%%%%%%%%%%%%%%%%%%%%%%%%%%%%%%%%%%%%%%%%%%%%%%%%%%%%%%%%%%%%%%%%%%
\subsection{A'Campo's Divides and Their Conormal Lifts}\label{ssec:ACampoDivide}

Let $f\in\C[x,y]$ be an isolated singularity, $\bD^4_\varepsilon\sse\C^2$ a Milnor ball for this singularity \cite[Corollary 4.5]{Milnor65}, $\varepsilon\in\R^+$, $\R^2=\{(x,y)\in\C^2:\Im(x)=0,\Im(y)=0\}\sse\C^2$ the real 2-plane and $\bD^2_\varepsilon=\bD^4_\varepsilon\cap\R^2$ a real Milnor 2-disk. Consider a real Morsification  $\tilde{f}_t(x,y)$, $t\in[0,1]$, such that, for $t\in(0,1]$, $f_t(x,y)$ has only $A_1$-singularities, its critical values are real and the level set $f^{-1}_t(0)\cap\bD^4_\varepsilon$, contains all the saddle points of the restriction $(f_t)|_{\bD^2_\varepsilon}$. The intersection $D_f=f^{-1}_t(0)\cap\bD^2_\varepsilon\sse\R^2$, where $\tilde{f}=f_1$,  is known as the {\it divide} of the real Morsification $\tilde{f}$ \cite{ACampo98,AGZV1,Ishikawa04}. It is the image of a union $I$ of closed segments under an immersion $i:I\lr\R^2$ \cite{GibsonIshikawa02,Hirasawa02,IshikawaNaoe20}, and we assume it is a generic such immersion. By considering $I\sse\R^2$ as a wavefront, its biconormal lift \cite{Arnold90} is a Legendrian link $\La_0(D_f)$ in the contact boundary $(\dd(T^*\R^2),\la_\st|_{\dd(T^*\R^2)})$. See \cite{ACampo75,GuseinZade74} for the existence and details of real Morsifications.

The biconormal lift $\La_0(D_f)\sse \dd(T^*\R^2)$ of the immersed curve $D_f$ to the (unit) boundary of the cotangent bundle $T^*\R^2$ can be constructed using the three local models:

\begin{itemize}
	\item[(i)] The biconormal lift near a smooth interior point $P\in D_f$ is defined as
	$$\{u\in T^*\Op(P):\|u_q\|=1,T_qD_f\sse\ker(u_q)\mbox{ for }q\in D_f\cap\Op(P)\},$$
	for an arbitrary fixed choice of metric in $\R^2$, and neighborhood $\Op(P)\sse\R^2$.\\
	\item[(ii)] The biconormal lift near an immersed point $P\in D_f$ is defined as the (disjoint) union of the conormal lifts of each of its embedded branches through $P$.\\
	\item[(iii)] Finally, at the endpoint $P\in D_f$, the biconormal lift is defined as the closure in $T^*_P\R^2$ of one of the components of
	$$T^*_P\R^2\setminus\{u\in T^*_P\R^2:\|u_q\|=1,T_PD_f\sse\ker(u_P)\mbox{ for }q\in D_f\cap\Op(P)\},$$
	where the tangent line $T_PD_f$ is defined as the (ambient) smooth limit of the tangent lines $T_{q_i}D_f$ for a sequence $\{q_i\}_{i\in\N}$ of interior points $q_i\in D_f$ convering to $P\in D_f$. There are two such components, but our arguments are independent of such a choice.
\end{itemize}   

\begin{remark}
The restriction of the canonical projection $\pi:\dd(T^*\R^2)\lr\R^2$ is finite two-to-one onto the image of the interior points of $I$. The pre-image of $\pi$ at (the image of) endpoints contains an open interval of the Legendrian circle fiber. For instance, the full conormal lift of a point $p\in\R^2$ is Legendrian isotopic to the zero section $\S^1\sse(J^1\S^1,\xi_\st)$, as is the conormal lift of an embedded closed segment.\hfill$\Box$
\end{remark}

These local models define the Legendrian biconormal lift $\La_0(D_f)\sse (\dd(T^*\R^2),\xi_\st)$ of the divide of the Morsification $\tilde{f}$. Let $\iota_0:\S^1\lr(\S^3,\xi_\st)$ be a Legendrian embedding in the isotopy class of the standard Legendrian unknot. A small neighborhood $\Op(\iota(\S^1))$ is contactomorphic to the $1$-jet space $(J^1\S^1,\xi_\st)\cong(T^*\S^1\times\R_t,\ker\{\la_\st-dt\})$, yielding a contact inclusion $\iota:(J^1\S^1,\xi_\st)\lr(\S^3,\xi_\st)$. Note that there exists a contactomorphism $\Psi:(\dd(T^*\R^2),\xi_\st)\lr(J^1\S^1,\xi_\st)$, where the zero section in the $1$-jet space bijects to the Legendrian boundary of a Lagrangian cotangent fiber in $T^*\R^2$. This leads to the following:

\begin{definition}\label{def:Leglift}
Let $D_f\sse\R^2$ be the divide associated to a real Morsification of an isolated singularity $f\in\C[x,y]$. The biconormal lift $\La(D_f)\sse (\S^3,\xi_\st)$ is the image $\iota(\Psi(\La_0(D_f)))$. That is, the biconormal lift $\La(D_f)\sse (\S^3,\xi_\st)$ is the satellite of the biconormal lift $\La_0(D_f)\sse (\dd(T^*\R^2),\xi_\st)$ with companion knot the standard Legendrian unknot in $(\S^3,\xi_\st)$.\hfill$\Box$
\end{definition}

The central result in N. A'Campo's articles \cite{ACampo98,ACampo99} is that the Legendrian link $\La(D_f)\sse S^3$ is {\it smoothly} isotopic to the transverse link $T_f$, see also \cite{IshikawaNaoe20}. The formulation above, in terms of the satellite to the Legendrian unknot, is not necessarily explicit in the literature on divides and their Legendrian lifts, but probably known to the experts, as it is effectively being used in M. Hirasawa's visualization \cite[Figure 2]{Hirasawa02}. See also the work of T. Kawamura \cite[Figure 2]{Kawamura02}, M. Ishikawa and W. Gibson \cite{GibsonIshikawa02,Ishikawa04} and others \cite{Chmutov03,IshikawaNaoe20}. The phrasing in Definition \ref{def:Leglift} might help crystallize the contact topological characteristics of each object.

\begin{center}
	\begin{figure}[h!]
		\centering
		\includegraphics[scale=0.7]{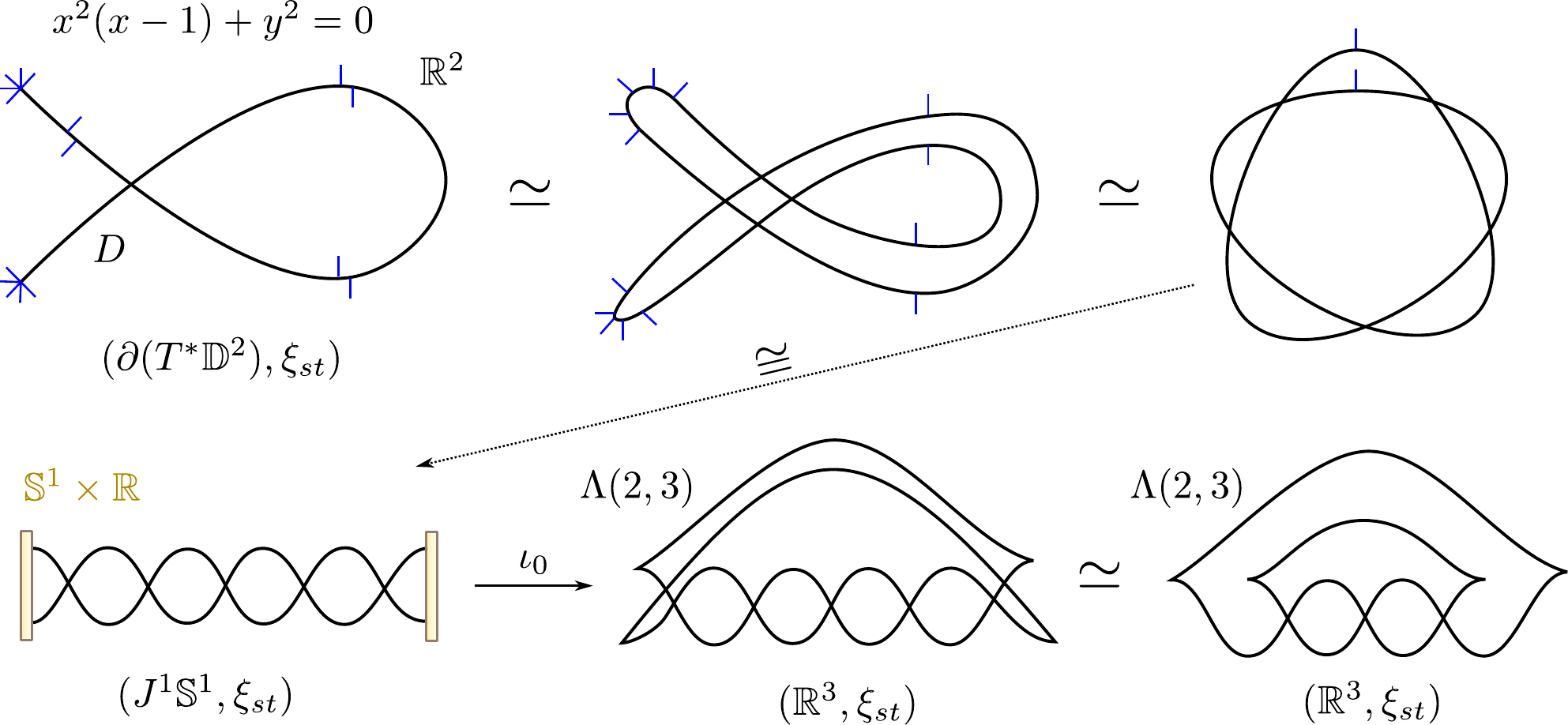}
		\caption{A co-oriented divide $D$ for the $A_2$-singularity $f(x,y)=x^3+y^2$, as a front for its Legendrian link $\La(D)\sse(\dd(T^*\D^2),\xi_\st)$. That is, the biconormal lift of $D$ is $\La(D)$. Its satellite along the standard unknot is the (unique) max-tb Legendrian trefoil $\La(2,3)\sse(\R^3,\xi_\st)$.}
		\label{fig:A2Singularity}
	\end{figure}
\end{center}

\begin{example}
$($i$)$ The $A_1$-singularity admits two real Morsifications $\tilde{f}_1(x,y)=x^2+y^2-1$ and $\tilde{f}_2(x,y)=x^2-y^2$, with corresponding divides
$$D_1=\{(x,y)\in\R^2:x^2+y^2-1=0\},\quad D_2=\{(x,y)\in\R^2:x^2-y^2=0\}.$$
The biconormal lift $\La_0(D_1)\sse(\dd T^*\R^2,\xi_\st)$ consists of two copies of the Legendrian fibers of the fibration $\pi:\dd T^*\R^2\lr\R^2$. Each of these two copies is satellited to the standard Legendrian unknot, forming a maximal-tb Hopf link $\La(D_1)\sse(\S^3,\xi_\st)$. Indeed, the second Legendrian fiber can be assumed to be the image of the first Legendrian fiber under the Reeb flow. Hence, the Legendrian link $\La(D_1)\sse(\S^3,\xi_\st)$ must consist of the standard Legendrian unknot union a small Reeb push-off. Similarly, the biconormal lift $\La_0(D_2)\sse(\dd T^*\R^2,\xi_\st)$ equally consists of two copies of the Legendrian fibers of the fibration $\pi:\dd T^*\R^2\lr\R^2$, and thus both Legendrian links $\La(D_1),\La(D_2)$ are Legendrian isotopic in $(\S^3,\xi_\st)$.\\

$($ii$)$ The $A_2$-singularity $f(x,y)=x^3+y^2$ admits the real Morsification $\tilde{f}(x,y)=x^2(x-1)+y^2$, whose divide is
$D=\{(x,y)\in\R^2:x^2(x-1)+y^2=0\}$. The divide $D\sse\R^2$ with its co-orientations is depicted in Figure \ref{fig:A2Singularity}. The first row depicts a wavefront homotopy, which yields a Legendrian isotopy in $(\dd T^*\R^2,\xi_\st)$. The second row starts by depicting the change of front projections induced by the contactomorphism $\Psi$, and performs the satellite to the standard Legendrian unknot. The resulting Legendrian $\La_f\sse(\S^3,\xi_\st)$ is the max-tb Legendrian trefoil knot $\La(2,3)$.

In general, divides for $A_n$-singularities are depicted in \cite[Figure 4]{Morsification_FPST}. We invite the reader to study the $A_5$-singularity $f(x,y)=x^5+y^2$ with its divide
$$D=\{(x,y)\in\R^2:x^2(x^3+x^2-x-1)+y^2=0\}$$
and discover the corresponding Legendrian isotopy, as in Figure \ref{fig:A2Singularity}. The isotopy should end with the max-tb Legendrian link $\La(2,5)\sse(\S^3,\xi_\st)$, e.g. expressed as the (rainbow) closure of the positive braid $\sigma_1^3$, equiv.~the $(-1)$-framed closure of $\sigma_1^5$. The general case $n\in\N$ is similar.\hfill$\Box$
\end{example}

%%%%%%%%%%%%%%%%%%%%%%%%%%%%%%%%%%%%%%%%%%%%%%%%%%%%%%%%%%%%%%%%%%%%%%%%%%%%%%%%%%%%%%%%%%%%%%%%%%%%%%%%%%%%%%%%%%%%%%%%%%%%%%%%%%%%%%%
\subsection{Proof of Theorem \ref{thm:main}}\label{ssec:proofmain}

There is an interesting dissonance at this stage. The Legendrian link $\La(D_f)\sse\S^3$ in Definition \ref{def:Leglift} and the transverse link $T_f\sse\S^3$ of the singularity are smoothly isotopic, yet certainly {\it not} contact isotopic. Their relationship is described by the following:

\begin{prop}\label{prop:TransPush}
Let $f\in\C[x,y]$ be an isolated singularity and $D_f\sse\R^2$ the divide associated to a real Morsification. The positive transverse push-off $\tau(\La(D_f))\sse(\S^3,\xi_\st)$ of the Legendrian link $\La(D_f)$ is contact isotopic to the transverse link $T_f\sse (\S^3,\xi_\st)$. In particular, $\La(D_f)\sse(\S^3,\xi_\st)$ is Legendrian isotopic to the Legendrian link $\La_f\sse(\S^3,\xi_\st)$ associated to the singularity $f\in\C[x,y]$.\hfill$\Box$
\end{prop}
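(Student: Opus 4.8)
The plan is to deduce Proposition~\ref{prop:TransPush} from three inputs: A'Campo's smooth identification $\La(D_f)\simeq T_f$; a computation of the Thurston-Bennequin invariant of the biconormal lift $\La(D_f)$ straight from the combinatorics of the divide; and the Legendrian simplicity of algebraic links already exploited in the proof of Proposition~\ref{prop:unique}.

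First I would isolate what is free. By A'Campo's theorem (with the Ishikawa-Naoe refinement cited above), the underlying smooth link of $\La(D_f)\sse\S^3$ is isotopic to $T_f$; hence $\La(D_f)$ is a Legendrian representative of the smooth type of an algebraic link, and that smooth type is Legendrian simple by the argument in the proof of Proposition~\ref{prop:unique}, so it has a \emph{unique} maximal Thurston-Bennequin representative, namely $\La_f$. Thus the whole statement reduces to the single equality $tb(\La(D_f))=\overline{tb}(T_f)$: once this holds, $\La(D_f)$ is Legendrian isotopic to $\La_f$, applying the positive transverse push-off to such an isotopy gives $\tau(\La(D_f))\simeq\tau(\La_f)$, and $\tau(\La_f)$ is transversely isotopic to $T_f$ by Definition~\ref{def:legendrian}. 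This yields the main assertion, and the ``in particular'' clause is then a restatement.

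The core of the proof is therefore the identity $tb(\La(D_f))=\overline{tb}(T_f)$. I would first record that for an algebraic link $\overline{tb}(T_f)=-\chi(M_f)$: the bound $tb\les-\chi(M_f)$ is the slice-Bennequin inequality for a Seifert surface isotopic to the Milnor fiber, and it is sharp because $T_f$ is the transverse binding of the Milnor open book, whose page is $M_f$ (Subsection~\ref{ssec:LegOfSing}). It then remains to show $tb(\La(D_f))=-\chi(M_f)$, which I would do diagrammatically: after the contactomorphism $\Psi$ and the satellite of Definition~\ref{def:Leglift}, the Legendrian $\La(D_f)$ is presented by an explicit front obtained from the co-oriented divide $D_f$ by turning each of its $\delta$ double points into a transverse crossing and inserting the prescribed endpoint model over each endpoint of $I$. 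Reading off the writhe and the number of cusps of this front in terms of $\delta$ and of the number of endpoints, and invoking the classical relation between the number of double points of a divide and the Milnor number ($\mu=2\delta$ in the unibranch case, equivalently $-\chi(M_f)=2\delta-1$, with the analogous accounting when $f$ has several branches), gives $tb(\La(D_f))=-\chi(M_f)$. A more conceptual alternative, avoiding the front bookkeeping, is that A'Campo's fiber surface for $D_f$ can be realized as an exact Lagrangian filling of $\La(D_f)$ of Euler characteristic $\chi(M_f)$, which forces $tb$ to be maximal; I would use whichever version is cleaner to make precise.

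The main obstacle is exactly this $tb$-computation. The biconormal lift is not a standard Legendrian front: its projection to $\R^2$ is generically two-to-one and degenerates to an interval of the fibre circle over the endpoints of the divide, so equating its $tb$ with $-\chi(M_f)$ forces one to be careful with the endpoint model in item~(iii) above, and in particular to verify the claim made there that the arbitrary choice between the two components leaves $tb$ unchanged. A second, minor point is orientation bookkeeping for the push-off: one should check that the co-orientations of $D_f$ are the ones for which the \emph{positive} transverse push-off, and not its reverse, corresponds to $T_f$; this is automatic once $\La(D_f)\simeq\La_f$, since $\tau(\La_f)\simeq T_f$ by construction. The remaining steps, namely the reduction through Legendrian simplicity and the passage from a Legendrian isotopy to a transverse isotopy, are formal.
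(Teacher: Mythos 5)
Your proposal takes a genuinely different route from the paper's. The paper proves the transverse statement \emph{directly}: it follows A'Campo's almost-complexification $\wt{f}_\C:T^*\R^2\to\C$, identifies $T_f$ with the intersection $T^\varepsilon\R^2\cap \wt f_\C^{-1}(0)$, and then verifies in explicit local coordinates (near smooth points of the divide, and similarly near double points) that this transverse curve is the positive transverse push-off of the biconormal lift $\La(D_f)$. The Legendrian statement then comes along as the ``in particular'' clause. You instead start from A'Campo's smooth isotopy $\La(D_f)\simeq T_f$ as a black box, invoke the Legendrian simplicity of algebraic links already used in Proposition~\ref{prop:unique}, and reduce everything to the single equality $tb(\La(D_f))=\overline{tb}(T_f)=-\chi(M_f)$, after which the transverse statement is formal by applying $\tau$. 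This reduction is sound in outline, and it has the appeal of not needing the local Moser-type comparison at all; what the paper's route buys, by contrast, is that it never leaves the divide picture and never needs the slice-Bennequin equality, LaFountain's uniqueness, or any $tb$ bookkeeping.

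There is, however, a genuine gap that you yourself flag as ``the core of the proof'': the identity $tb(\La(D_f))=-\chi(M_f)$ is asserted but not established, and neither of your two sketched routes to it is safe as written. The diagrammatic route requires translating the biconormal lift --- which is not a generic front, is two-to-one over interior points, and degenerates to an arc of the fibre circle over endpoints --- through the contactomorphism $\Psi$ and then the satellite of Definition~\ref{def:Leglift} into an honest front in $(\R^3,\xi_\st)$, and then carrying out the writhe-and-cusp accounting in terms of $\delta$, the branch number, and the endpoint count; none of this bookkeeping is done, and it is precisely the kind of step where sign and framing conventions (the arbitrary component choice at endpoints, the metric used for the biconormal lift, the framing shift under the satellite) can silently change the answer. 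Your ``more conceptual alternative,'' realizing the Milnor fiber as a smooth exact Lagrangian filling of $\La(D_f)$ to force maximality of $tb$, essentially reproduces the content of Theorem~\ref{thm:main}, whose proof in the paper comes \emph{after} and relies on Proposition~\ref{prop:TransPush}; invoking it here would therefore be circular unless you first re-derive the Lagrangian filling for the pair $(\R^4,\La(D'))$ without using the identification $\La(D')\simeq\La_f$, and even then you would need a smoothing of the singular skeleton $L(D')\cup\R^2$ into an embedded Lagrangian surface, which is not automatic. Until one of these two routes to the $tb$ equality is carried out carefully, the argument is a reduction rather than a proof.
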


\begin{proof} In A'Campo's isotopy \cite[Section 3]{ACampo98} from the link associated to the divide to the link of the singularity, the key step is the {\it almost complexification} of the Morsification $\wt{f}:\R^2\lr\R$. This replaces the $\R$-valued function $\wt{f}$ by an expression of the form
$$\wt{f}_\C:T^*\R^2\lr\C,\quad \wt{f}_\C(x,u):=\wt{f}(x)+i d\wt{f}(x)(u)-\frac{1}{2}\chi(x)H(f(x))(u,u),$$
which is a $\C$-valued function, where $u=(u_1,u_2)\in\R^2$ are Cartesian coordinates in the fiber. Here $H(f(x))$ is the Hessian of $f$, which is a quadratic form, and $\chi(x)$ is a bump function with $\chi(x)\equiv 1$ near double-points of the divide $D_f\sse\R^2$ and $\chi(x)\equiv 0$ away from them. The results in \cite{ACampo98}, see also \cite{IshikawaNaoe20,Ishikawa04}, imply that the transverse link of the singularity is isotopic to the intersection $T^\varepsilon\R^2\cap f^{-1}_\C(0)\sse (T^\varepsilon\R^2,\xi_\st)$ of the $\varepsilon$-unit cotangent bundle with the $0$-fiber of $\widetilde{f}_\C$, $\varepsilon\in\R^+$ small enough.\footnote{This mimicks S. Donaldson's construction of Lefschetz pencils, where the boundary of a fiber is a transverse link at the boundary, see also E. Giroux's construction of the contact binding of an open book \cite{Giroux17,Giroux02}.} It thus suffices to compare this transverse link to the Legendrian lift $\La(D_f)\sse (T^\varepsilon\R^2,\xi_\st)$, which we can check in each of the two local models: near a smooth interior point of the divide $D_f$ and near each of its double points. Note that the case of boundary points can be perturbed to that of smooth interior points, as in the first perturbation in Figure \ref{fig:A2Singularity}. We detail the computation in the first local model, the case of double points follows similarly.

The contact structure $(T^\varepsilon\R^2,\xi_\st)$ admits the contact form $\xi_\st=\ker\{\cos(\theta)dx_1-\sin(\theta)dx_2\}$, $(x_1,x_2)\in\R^2$ and $\theta\in\S^1$ is a coordinate in the fiber -- this is the angular coordinate in the $(u_1,u_2)$-coordinates above. The divide can be assumed to be cut locally by $D=\{(x_1,x_2)\in\R^2:x_2=0\}\sse\R^2$, as we can write $\wt{f}(x_1,x_2)=x_2$, and thus its bi-conormal Legendrian lift is
$$\La(D)=\{(x_1,x_2,\theta)\in\R^2\times\S^1:x_2=0,\theta=\pm\pi/2\}.$$
Note that the tangent space $T_{(x_1,x_2)}\La(D)$ of $\La(D)$ is spanned by $\dd_{x_1}$, which satisfies
$$\langle\dd_{x_1}\rangle=\ker\{\cos(\theta)dx_1-\sin(\theta)dx_2\},\mbox{ as }\cos(\theta)=0\mbox{ at }\theta=\pm\pi/2.$$

Since the model is away from a double point, $\wt{f}_\C(x,u):=x_2+i(0,1)\cdot(u_1,u_2)^t=x_2+iu_2$ becomes the standard symplectic projection $\R^2\times\R^2\lr\R^2$ onto the second (symplectic) factor. The zero set is thus $x_2=0$ and $u_2=0$ and so the intersection with $T^\varepsilon\R^2$ is
$$\kappa=\{(x_1,x_2,\theta)\in\R^2\times\S^1:x_2=0,\theta=0,\pi\},$$
as the points with $|u_1|^2=\varepsilon$ are at $\theta$-coordinates $\theta=0,\pi$. The tangent space $T\kappa=\langle \dd_{x_1}\rangle$ is spanned by $\dd_{x_1}$, which is transverse to the contact structure along $\kappa$:
$$(\cos(\theta)dx_1-\sin(\theta)dx_2)(\dd_{x_1})=\pm1,\quad \mbox{at }\theta=0,\pi.$$
It evaluates positive for $\theta=0$ and negative for $\theta=\pi$, which corresponds to each of the two branches in the biconormal lift. It is readily verified \cite[Section 3.1]{Geiges08} that $\kappa$ is the transverse push-off, positive {\it and} negative\footnote{The orientation for the negative branch is reversed when considering the global link $\kappa$.}, of $\La(D)$, e.g. observe that the annulus $\{(x_1,x_2,\theta)\in\R^2\times\S^1:x_2=0,0\leq\theta\leq\pi\}$ is a (Weinstein) ribbon for the Legendrian segment $\{(x_1,x_2,\theta)\in\R^2\times\S^1:x_2=0,\theta=\pi/2\}$.
\end{proof}
%file:///C:/Users/Roger/Desktop/papers/ADEConjecture/ReferencesUsed/Ishikawa_Tangent.pdf
%Use: https://arxiv.org/pdf/1807.01419.pdf
%file:///C:/Users/Roger/Downloads/1-s2.0-S0040938303000405-main.pdf
%file:///C:/Users/Roger/AppData/Local/Temp/1-s2.0-S0040938303000405-main.pdf
%https://reader.elsevier.com/reader/sd/pii/S0040938303000405?token=4CB7F68402A3A37CC2F548B1C6061A059163561133E12AA1F0EDCE22D30FCC9ED91112D8445F89A43FFF989C32F8C467
%https://arxiv.org/pdf/1807.01419.pdf

Proposition \ref{prop:TransPush} implies that real Morsifications $\wt f$ yield models for the Legendrian link $\La_f\sse(\S^3,\xi_\st)$ of a singularity $f\in\C[x,y]$, as introduced in Definition \ref{def:legendrian}. That is, given an isolated plane curve singularity $f\in\C[x,y]$, the Legendrian link $\La_f\sse(\S^3,\xi_\st)$ is Legendrian isotopic to the Legendrian lift $\La(D_{\wt f})\sse(\S^3,\xi_\st)$ of a divide $D_{\wt f}\sse\R^2$ of a real Morsification, and thus we now directly focus on studying the Legendrian links $\La(D_{\wt f})\sse(\S^3,\xi_\st)$.

Let us now prove Theorem \ref{thm:main}. For that, we use N. A'Campo's description \cite{ACampo99} of the set of vanishing cycles associated to a divide of a real Morsification. For each double point $p_i\in D$ in the divide $D=D_{\wt f}$, there is a vanishing cycle $\vartheta_{p_i}$. For each bounded region of $\R^2\setminus D$, which we label by $q_j$, there is a vanishing cycle $\vartheta_{q_j}$. First, we visualize those vanishing cycles by perturbing the divide $D\sse\R^2$ to a divide $D'\sse\R^2$, as depicted in Figure \ref{fig:ACampoVC}.(i) and (ii). The lift of $D'$ only uses one conormal direction at a given point. This perturbation is a front homotopy and thus produces a Legendrian isotopy of the associated Legendrian link.
	
\begin{center}
	\begin{figure}[h!]
		\centering
		\includegraphics[scale=0.75]{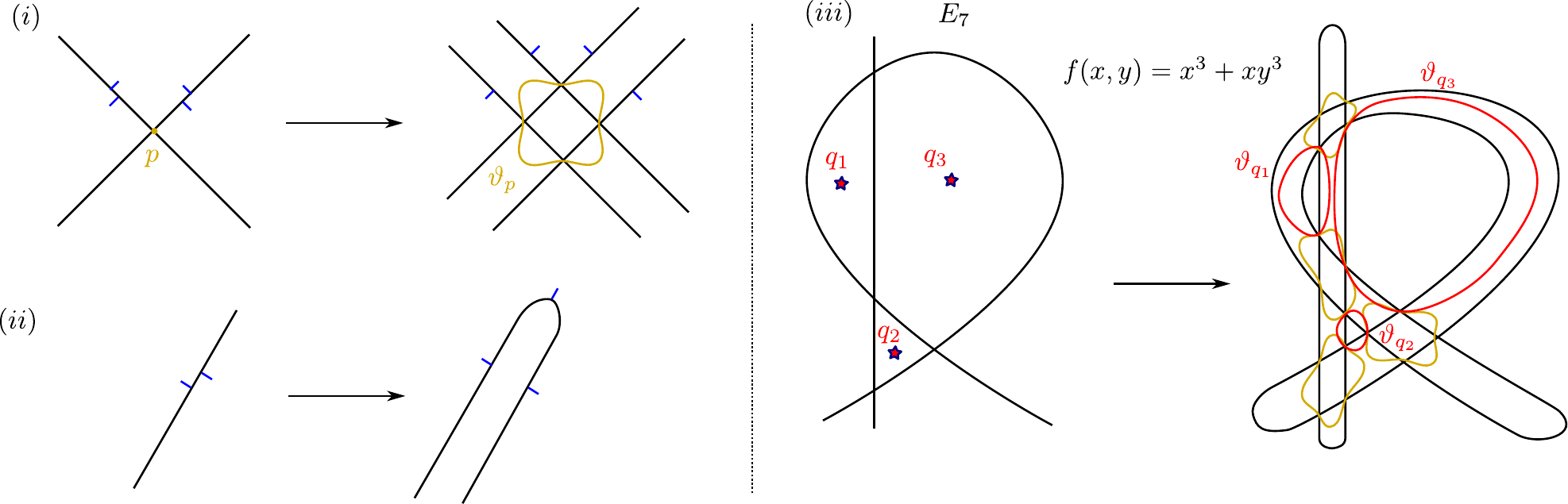}
		\caption{(Left) Two front homotopies from the pieces of a divide to a (generic) Legendrian front. The vanishing cycle $\vartheta_p$ is drawn in the Lagrangian base $\R^2$. (Right) A perturbation of a divide for the $E_7$-singularity. The vanishing cycles $\vartheta_p$ coming from the double points of the divide are drawn in yellow, and the vanishing cycles $\vartheta_q$ coming from each of the three bounded interior regions are drawn in red.}
		\label{fig:ACampoVC}
	\end{figure}
\end{center}

Once the perturbation has been performed, we can draw the curves $\vartheta_{p_i},\vartheta_{q_j}$ as in Figure \ref{fig:ACampoVC}. For instance, Figure \ref{fig:ACampoVC}.(iii) depicts the case of the $E_7$-singularity with a particular choice of divide $D$ and its perturbation $D'$, with $\vartheta_{p_i}$ in yellow and $\vartheta_{q_j}$ in red. That is, for each double point, the curve $\vartheta_{p_i}$ is a closed simple curve exactly through the four new double points in $D'$. For each closed region, $\vartheta_{q_j}$ is a simple closed curve which (exactly) passes through the double points at the perturbed boundary in $D'$ of the region $q_j$. The algorithm in \cite{ACampo99} implies that a {\it singular} model of the topological Milnor fiber of $f$ is obtained as $\R^2$ union the conical Lagrangian conormal $L(D')$ of the perturbed divide $D'$. This Lagrangian conormal intersects the unit cotangent bundle of $T^*\R^2$ at $\La(D')$ and thus, being conical, the information of $L(D')$ is equivalent to that of $\La(D')$. In addition, \cite{ACampo99} guarantees that the curves $\vartheta_{p_i},\vartheta_{q_j}$ are vanishing cycles for the real Morsification $\wt f$.

At this stage, the key fact that we use from A'Campo's algorithm is that our choice of immersion of the divide $D'\sse\R^2$, given by the perturbation, exhibits Lagrangian 2-disks $\D^2_{p_i},\D^2_{q_j}\sse\R^2$ such that $\dd \D^2_{p_i}=\vartheta_{p_i}$ and $\dd \D^2_{q_j}=\vartheta_{q_j}$. For $\vartheta_{p_i}$, this follows from Figure \ref{fig:ACampoVC}.(i), where the 2-disk $\D^2_{p_i}$ is (a small extension of) the square given by the four double points in $D'$ appearing in the perturbation of $p_i\in D$. For $\vartheta_{q_j}$, the 2-disk $\D^2_{q_j}$ is chosen to be a small extension of the bounded region itself. These disks are (exact) Lagrangian because $\R^2\sse(T^*\R^2,\la_\st)$ is exact Lagrangian. The Liouville vector field in $(T^*\R^2,\la_\st)$ vanishes at $\R^2$ and is tangent to $L(D')$. Hence, the inverse flow of the Liouville field retracts the Weinstein pair $(\R^4,\La(D'))$ to $L(D')$ union the zero section $\R^2$. This shows that $L(D')\cup\R^2$ is a Lagrangian skeleton of the Weinstein pair $(\R^4,\La(D))$. Now, the Lagrangian skeleton has an open boundary at the unbounded part of $\R^2$, which can be trimmed \cite{Starkston} to the disks $\D^2_{p_i},\D^2_{q_j}\sse\R^2$. Thus, the union of the conical Lagrangian $L(D')$ and the Lagrangian  2-disks $\D^2_{p_i},\D^2_{q_j}\sse\R^2$ is a Lagrangian skeleton of the Weinstein pair $(\R^4,\La(D'))$, as required.\hfill$\Box$

%%%%%%%%%%%%%%%%%%%%%%%%%%%%%%%%%%%%%%%%%%%%%%%%%%%%%%%%%%%%%%%%%%%%%%%%%%%%%%%%%%%%%%%%%%%% %%%%%%%%%%%%%%%%%%%%%%%%%%%%%%%%%%%%%%%%%%%
\subsection{Lagrangian Skeleta} Arboreal Lagrangian skeleta $\L\sse(W,\la)$ for Weinstein 4-manifolds are defined in \cite{Nadler17,Starkston}. Given a Weinstein manifold $W=W(\La)$, the arborealization procedure in \cite{Starkston} yields an arboreal Lagrangian skeleton $\L\sse(W,\la)$ with $\dd\L\neq\varnothing$. Intuitively, those Lagrangian skeleta are obtained by attaching 2-handles to $\D^2$ along a (modification of a) front for $\La$, and thus roughly contain the same information as a front $\pi(\La)\sse\R^2$ for $\La$. Let $\La\sse(\S^3,\xi_\st)$ be a Legendrian link and $(W,\la)$ a Weinstein manifold.

\begin{definition} 
A compact arboreal Lagrangian skeleton $\L\sse\C^2$ for a Weinstein pair $(\C^2,\La)$ is said to be closed if $\dd\L=\La$. A compact arboreal Lagrangian skeleton $\L\sse W$ for a Weinstein manifold $(W,\la)$ is said to be closed if $\dd\L=\varnothing$.
\end{definition}

The Lagrangian skeleta in Theorem \ref{thm:main} and Corollary \ref{cor:main} are arboreal and closed. For reference, we denote the two Cal-skeleta associated to a real Morsification $\tilde{f}$ of an isolated plane curve singularity $f\in\C[x,y]$ by
$$\displaystyle\L(\tilde f):=M_f\cup_{\vartheta(\tilde f)}\displaystyle\bigcup_{i=1}^{|\vartheta(\tilde f)|} \D^2,\qquad \displaystyle\overline{\L}( \tilde f):=\overline{M}_f\cup_{\vartheta(\tilde f)}\displaystyle\bigcup_{i=1}^{|\vartheta(\tilde f)|} \D^2.$$
The former $\L(\tilde f)$ is a Lagrangian skeleton for the Weinstein pair $(\C^2,\La_f)$, and the latter for the Weinstein 4-manifold $W(\La_f)$. The notation $\overline{M}_f$ stands for the surface obtained by capping each of the boundary components of the Milnor fiber $M_f$ with a 2-disk. The notation $\L(f)$ and $\overline{\L}(f)$ will stand for any Cal-skeleton obtained from {\it a} real Morsification $\tilde{f}$ as in Theorem \ref{thm:main} and Corollary \ref{cor:main}.

\begin{remark} In the context of low-dimensional topology, the 2-complexes underlying these Lagrangian skeleta are often referred to as {\it Turaev's shadows}, following \cite[Chapter 8]{Turaev94}. In particular, it is known how to compute the signature of a (Weinstein) 4-manifold from any Cal-skeleton by using \cite[Chapter 9]{Turaev94}. Similarly, the $SU(2)$-Reshetikhin-Turaev-Witten invariant of the 3-dimensional (contact) boundary can be computed with the state-sum formula in \cite[Chapter 10]{Turaev94}. It would be interesting to explore if such combinatorial invariants can be enhanced to detect information on the contact and symplectic structures.\hfill$\Box$
\end{remark}

\section{Augmentation Stack and The Cluster Algebra of Fomin-Pylyavskyy-Shustin-Thurston}\label{sec:cluster} In the article \cite{Morsification_FPST}, the authors develop a connection between the topology of an isolated singularity $f\in\C[x,y]$ and the theory of cluster algebras. In concrete terms, they associate a cluster algebra $A(f)$ to an isolated singularity. An initial cluster seed for $A(f)$ is given by a quiver $Q(D_{\wt f})$ coming from the A$\Gamma$-diagrams of a divide $D_{\wt f}$ of a real Morsification of $f$. Equivalently, by \cite{ACampo99,GuseinZade74}, the quiver $Q(D_{\wt f})$ is the intersection quiver for a set of vanishing cycles associated to a real Morsification of $f$. The conjectural tenet in \cite{Morsification_FPST} is that different choices of Morsifications lead to mutation equivalent quivers and, conversely, two quivers associated to two real Morsifications of the {\it same} complex topological singularity must be mutation equivalent.

There are two varieties associated to a cluster algebra, the $\mathcal{X}$-cluster variety and the $\SA$-cluster variety \cite{FockGoncharov_ModuliLocSys,Goncharov17,ShenWeng19}. In the case of the cluster algebra $A(f)$ from \cite{Morsification_FPST}, one can ask whether either of these varieties has a particularly geometric meaning. Our suggestion is that either of these cluster varieties is the moduli space of {\it exact} Lagrangian fillings for the Legendrian knot $\La_f\sse(\R^3,\xi_\st)$. Equivalently, they are the moduli space of (certain) objects of a Fukaya category associated to the Weinstein pair $(\C^2,\La_f)$; for instance, the partially wrapped Fukaya category of $\C^2$ stopped at $\La_f$. In this sense, these cluster varieties are mirror to the Weinstein pair $(\R^4,\La_f)$.\footnote{The difference between $\mathcal{X}$- and $\mathcal{A}$-varieties should be the {\it decorations} we require for the Lagrangian fillings.} Focusing on the Legendrian link $\La_f\sse(\R^3,\xi_\st)$, let us then   suggest an alternative route from a plane curve singularity $f\in\C[x,y]$ to a cluster algebra $\SA(f)$, following Definition \ref{def:legendrian} and Proposition \ref{prop:unique} and \ref{prop:TransPush}.

Starting with $f\in\C[x,y]$, consider the Legendrian\footnote{In the context of plabic graphs \cite[Section 6]{Morsification_FPST}, the zig-zag curves \cite{Goncharov17,Postnikov06} also provide a front for the Legendrian link $\La_f$.} $\La_f\sse(\R^3,\xi_\st)$, where $(\R^3,\xi_\st)$ is identified as the complement of a point in $(\S^3,\xi_\st)$ and the Legendrian DGA $\A(\La_f)$, as defined by Y. Chekanov in \cite{Chekanov02} and see \cite{EtnyreNg18}. Then we define $\SA(f)$ to be the coordinate ring of functions on the {\it augmentation} variety $\SA(\La_f)$ of the DGA $\A(\La_f)$. Technically, the DGA $\A(\La_f)$ allows for a choice of base points, and the augmentation variety depends on that. Thus, it is more accurate to define:

\begin{definition}\label{def:augmentation} Let $f\in\C[x,y]$ be an isolated singularity, the augmentation algebra $\SA(f)$ associated to $f$ is the ring of $k$-regular functions on the moduli stack of objects $\ob(\Aug_+(\La_f))$ of the augmentation category $\Aug_+(\La_f)$.\hfill$\Box$
\end{definition}

The $\Aug_+(\La)$ augmentation category of a Legendrian link $\La\sse(\R^3,\xi_\st)$ is introduced in \cite{NRSSZ}. An exact Lagrangian filling\footnote{Throughout the text, exact Lagrangian fillings are, if needed, implicitely endowed with a $\C^*$-local system.} defines an object in the category $\Aug_+(\La)$, and the morphisms between two such objects are given by (a linearized version of) Lagrangian Floer homology. In fact, there is a sense in which any object in $\Aug_+(\La)$ comes from a Lagrangian filling \cite{PanRutherford19,PanRutherford20}, possibly immersed, and thus $\ob(\Aug_+(\La))$ is a natural candidate for a moduli space of Lagrangian fillings. The algebra $\SA(f)$ is known to be a cluster algebra \cite{GSW20_1} in characteristic two. The lift to characteristic zero can be obtained by combining \cite{CasalsNg20} and \cite{GSW20_1}.

By Proposition \ref{prop:unique}, $\SA(f)$ is a well-defined invariant of the complex topological singularity. For these Legendrian links $\La=\La_f$, the Couture-Perron algorithm \cite{CouturePerron} implies that there exist a Legendrian front $\pi(\La_f)\sse\R^2$ given by the $(-1)$-closure of a positive braid $\beta\Delta^2$, where $\Delta$ is the full twist; equivalently the front is the rainbow closure of the positive braid $\beta$ \cite{CasalsHonghao}. Hence, there is a set of non-negatively graded Reeb chords generating the DGA $\A(\La_f)$ and $\ob(\Aug_+(\La_f))$ coincides with the set of $k$-valued augmentations of $\A(\La_f)$ where exactly {\it one} base point per component has been chosen, $k$ a field. The articles \cite{CasalsNg20,Kalman} provide an explicit and computational model for $\ob(\Aug_+(\La_f))$, and thus $\SA(f)$, as follows.

First, suppose that $\La=\La_f$ is a knot. Then, $\SA(f)$ is the algebra of regular functions of the affine variety
$$X(\beta):=\{\SB(\beta\Delta^2)+\mbox{diag}_{i(\beta)}(t,1,,\ldots,1)=0\}\sse\C^{|\beta\Delta^2|+1},$$
where $\SB$ are the ($i(\beta)\times i(\beta)$)-matrices defined in \cite[Section 3]{CasalsNg20} and Computation \ref{computation} below, $i(\beta)$ is the number of strands of $\beta,\Delta$, and $|\beta\Delta^2|$ is the number of crossings of $\beta\Delta^2$. In the case $\La_f$ is a {\it link} with $l$ components, the space $\ob(\Aug_+(\La_f))$ is a stack\footnote{Namely, it is isomorphic to a quotient of $X(\beta)\times(\C^*)^l$ by a non-free $(\C^*)^{l-1}$-action. }, with isotropy groups of the form $(\C^*)^k$. If the tenet \cite[Conjecture 5.5]{Morsification_FPST} holds, the affine algebraic type of the augmentation stack $\ob(\Aug_+(\La_f))$ of a Legendrian link should recover the Legendrian link $\La_f$ and the complex topological type of the singularity $f$. Here is how to compute $\ob(\Aug_+(\La_f))$.\\

\begin{computation}\label{computation} Let $\La=\La_f$ be an algebraic knot, we can find a set of equations for the affine variety $\ob(Aug_+(\La_f))$, essentially using \cite{Kalman06_PositiveBraid}, see also \cite{CasalsNg20}. Consider a positive braid\footnote{Note that $\beta^\circ$ can be written in the form $\beta^\circ=\beta\Delta^2$.} $\beta^\circ\in\mbox{Br}^+_n$ such that the $(-1)$-closure of $\beta^\circ$ is a front for $\La=\La(\beta^\circ)$. For $k\in[1,n-1]$, define the following $n\times n$ matrix $P_k(z)$, with variable $z\in\C$:
\[
(P_k(z))_{ij} = \begin{cases} 1 & i=j \text{ and } i\neq k,k+1 \\
1 & (i,j) = (k,k+1) \text{ or } (k+1,k) \\
z & i=j=k+1 \\
0 & \text{otherwise;}
\end{cases}
\]
Namely, $P_k(z)$ is the identity matrix except for the $(2\times 2)$-submatrix given by rows and columns $k$ and $k+1$, where it is $\left( \begin{smallmatrix} 0 & 1 \\ 1 & z \end{smallmatrix} \right)$. Suppose that the crossings of $\beta^\circ$, left to right, are $\sigma_{k_1},\ldots,\sigma_{k_s}$, $s=|\beta^\circ|\in\N$, $\sigma_i\in\mbox{Br}^+_{n}$ the Artin generators. Then the augmentation stack $\ob(\Aug_+(\La_f))$ is cut out in $\C^{s}\times\C^*=\Spec[z_1,z_2,\ldots,z_s,t,t^{-1}]$ by the $n^2$ equations
\begin{equation}\label{eq:aug}
\mbox{diag}_n(t,1,1,\ldots,1) + P_{k_1}(z_1)P_{k_2}(z_2)\cdots P_{k_s}(z_s)=0.
\end{equation}
The matrix $P_{k_1}(z_1)P_{k_2}(z_2)\cdots P_{k_s}(z_s)$ is denoted by $\SB(\beta^\circ)$. Equations \ref{eq:aug} provide a computational mean to an explicit description of the affine varieties $\ob(\Aug_+(\La_f))$ that yield the cluster algebra $\SA(f)$.\hfill$\Box$
\end{computation}

\begin{example} Consider the plane curve singularity\footnote{We have chosen this example as a continuation of \cite[Example 5.3]{CouturePerron} and \cite[Figure 6]{Morsification_FPST}.} described by
$$f(x,y)=-12 x^{10} y^2-4 x^9 y^2-2 x^7 y^4+6 x^6 y^4-4 x^3 y^6+x^{14}-2 x^{13}+x^{12}+y^8=$$
$$=\left(2 x^3 y^2-4 x^5 y+x^7-x^6-y^4\right) \left(2 x^3 y^2+4 x^5 y+x^7-x^6-y^4\right)$$
The Puiseux expansion yields $y(x)=x^{3/2}+x^{7/4}$ and using the Couture-Perron algorithm \cite{CouturePerron}, or \cite[Definition 11.3]{Morsification_FPST}, a positive braid word associated to this singularity is
$$\beta=(\sigma_2\sigma_1\sigma_3\sigma_2\sigma_1\sigma_3\sigma_2\sigma_1)\sigma_3(\sigma_1\sigma_2\sigma_3\sigma_1\sigma_2\sigma_3\sigma_1\sigma_2)\sigma_1\sigma_3$$
The Legendrian $\La_f\sse(\R^3,\xi)$ is the rainbow closure of $\beta$, and the $(-1)$-framed closure of $\beta^\circ=\beta\Delta^2$. Note that $\La_f$ is a knot, and thus we will use one base point $t\in\C^*$ in the computation of $X(\beta)=\ob(\Aug_+(\La_f))$. Following Computation \ref{computation} above, we can write equations for affine variety $X(\beta)$ as a subset $X(\beta)\sse\C^{31}\times\C^*$. We use coordinates $(z_1,z_2,\ldots,z_{31};t)\in\C^{31}\times\C^*$, $(z_1,z_2,\ldots,z_{19})$ corresponding to the $19$ crossings of $\beta$ and $(z_{20},\ldots,z_{31})$ account for the $12$ crossings of $\Delta^2\in\Br^+_3$. There are a total of 16 equations, the first three of which read as follows:

$$z_{11}+z_9 z_{12}+\left(z_9+\left(z_{11}+z_9 z_{12}\right) z_{18}\right) z_{20}+\left(z_{13}+z_9 z_{14}+\left(z_{11}+z_9 z_{12}\right) z_{15}\right) z_{21}+$$
$$\left(z_9 z_{16}+\left(z_{11}+z_9 z_{12}\right) z_{17}+\left(z_{13}+z_9 z_{14}+\left(z_{11}+z_9 z_{12}\right) z_{15}\right) z_{19}+1\right) z_{23}=-t^{-1}$$

$$z_7+z_6 z_9+(z_8 z_{10}+z_6 z_{11}+(z_7+z_6 z_9) z_{12}+1) z_{18}+(z_8+z_6 z_{13}+(z_7+z_6 z_9) z_{14}+$$
$$(z_8 z_{10}+z_6 z_{11}+(z_7+z_6 z_9) z_{12}+1) z_{15}) z_{22}+(z_6+(z_7+z_6 z_9) z_{16}+(z_8 z_{10}+z_6 z_{11}+(z_7+z_6 z_9) z_{12}+1) z_{17}+$$
$$(z_8+z_6 z_{13}+(z_7+z_6 z_9) z_{14}+(z_8 z_{10}+z_6 z_{11}+(z_7+z_6 z_9) z_{12}+1) z_{15}) z_{19}) z_{24}+(z_8 z_{10}+z_6 z_{11}+(z_7+z_6 z_9) z_{12}+$$
$$(z_7+z_6 z_9+(z_8 z_{10}+z_6 z_{11}+(z_7+z_6 z_9) z_{12}+1) z_{18}) z_{20}+(z_8+z_6 z_{13}+(z_7+z_6 z_9) z_{14}+$$
$$(z_8 z_{10}+z_6 z_{11}+(z_7+z_6 z_9) z_{12}+1) z_{15}) z_{21}+(z_6+(z_7+z_6 z_9) z_{16}+(z_8 z_{10}+z_6 z_{11}+(z_7+z_6 z_9) z_{12}+1) z_{17}+$$
$$(z_8+z_6 z_{13}+(z_7+z_6 z_9) z_{14}+(z_8 z_{10}+z_6 z_{11}+(z_7+z_6 z_9) z_{12}+1) z_{15}) z_{19}) z_{23}+1) z_{31}=0$$

$$z_1 z_8+(z_2+z_1 z_6) z_{13}+(z_2 z_9+z_1 (z_7+z_6 z_9)+1) z_{14}+(z_8 z_{10} z_1+z_1+(z_2+z_1 z_6) z_{11}+$$
$$(z_2 z_9+z_1 (z_7+z_6 z_9)+1) z_{12}) z_{15}+(z_2+z_1 z_6+(z_2 z_9+z_1 (z_7+z_6 z_9)+1) z_{16}+(z_8 z_{10} z_1+z_1+(z_2+z_1 z_6) z_{11}+$$
$$(z_2 z_9+z_1 (z_7+z_6 z_9)+1) z_{12}) z_{17}+(z_1 z_8+(z_2+z_1 z_6) z_{13}+(z_2 z_9+z_1 (z_7+z_6 z_9)+1) z_{14}+(z_8 z_{10} z_1+z_1+$$
$$(z_2+z_1 z_6) z_{11}+(z_2 z_9+z_1 (z_7+z_6 z_9)+1) z_{12}) z_{15}) z_{19}) z_{25}+(z_1 z_7+(z_2+z_1 z_6) z_9+(z_8 z_{10} z_1+z_1+$$
$$(z_2+z_1 z_6) z_{11}+(z_2 z_9+z_1 (z_7+z_6 z_9)+1) z_{12}) z_{18}+(z_1 z_8+(z_2+z_1 z_6) z_{13}+(z_2 z_9+z_1 (z_7+z_6 z_9)+1) z_{14}+$$
$$(z_8 z_{10} z_1+z_1+(z_2+z_1 z_6) z_{11}+(z_2 z_9+z_1 (z_7+z_6 z_9)+1) z_{12}) z_{15}) z_{22}+(z_2+z_1 z_6+(z_2 z_9+z_1 (z_7+z_6 z_9)+1) z_{16}+$$
$$(z_8 z_{10} z_1+z_1+(z_2+z_1 z_6) z_{11}+(z_2 z_9+z_1 (z_7+z_6 z_9)+1) z_{12}) z_{17}+(z_1 z_8+(z_2+z_1 z_6) z_{13}+(z_2 z_9+$$
$$z_1 (z_7+z_6 z_9)+1) z_{14}+(z_8 z_{10} z_1+z_1+(z_2+z_1 z_6) z_{11}+(z_2 z_9+z_1 (z_7+z_6 z_9)+1) z_{12}) z_{15}) z_{19}) z_{24}+1) z_{28}+$$
$$(z_8 z_{10} z_1+z_1+(z_2+z_1 z_6) z_{11}+(z_2 z_9+z_1 (z_7+z_6 z_9)+1) z_{12}+(z_1 z_7+(z_2+z_1 z_6) z_9+(z_8 z_{10} z_1+z_1+$$
$$(z_2+z_1 z_6) z_{11}+(z_2 z_9+z_1 (z_7+z_6 z_9)+1) z_{12}) z_{18}+1) z_{20}+(z_1 z_8+(z_2+z_1 z_6) z_{13}+(z_2 z_9+z_1 (z_7+z_6 z_9)+1) z_{14}+$$
$$(z_8 z_{10} z_1+z_1+(z_2+z_1 z_6) z_{11}+(z_2 z_9+z_1 (z_7+z_6 z_9)+1) z_{12}) z_{15}) z_{21}+(z_2+z_1 z_6+(z_2 z_9+z_1 (z_7+z_6 z_9)+1) z_{16}+$$
$$(z_8 z_{10} z_1+z_1+(z_2+z_1 z_6) z_{11}+(z_2 z_9+z_1 (z_7+z_6 z_9)+1) z_{12}) z_{17}+(z_1 z_8+(z_2+z_1 z_6) z_{13}+(z_2 z_9+$$
$$z_1 (z_7+z_6 z_9)+1) z_{14}+(z_8 z_{10} z_1+z_1+(z_2+z_1 z_6) z_{11}+(z_2 z_9+z_1 (z_7+z_6 z_9)+1) z_{12}) z_{15}) z_{19}) z_{23}) z_{30}=0$$

The remaining 13 equations are longer, but can be readily obtained. This hopefully illustrates that the method is computationally immediate.\footnote{Even if the equations themselves, being rather long, may not be particularly enlightening.}\hfill$\Box$
\end{example}

\begin{remark}
%	 Two comments on Definition \ref{def:augmentation} are in order:
	\begin{itemize}
		\item[(i)] One may consider the moduli stack $\ob(\Sh^s_{\La_f}(\R^2))$ of sheaves with microlocal rank-1 along $\La_f$, instead of $\ob(\Aug_+(\La_f))$. By \cite{NRSSZ}, there is an equivalence of categories $\Aug_+(\La_f)\cong\Sh^1_{\La_f}(\R^2)$. The stack $\ob(\Sh^1_{\La_f}(\R^2))$ is a $\mathcal{X}$-cluster variety; the associated $\SA$-cluster variety in the cluster ensemble is the moduli of {\it framed} sheaves \cite{ShenWeng19}.\footnote{The cluster algebra structure for $\SA(f)$ defined by \cite{GSW20_1} is obtained by pulling-back the cluster algebra structure of the open Bott-Samelson cell associated to $\beta$. There should exist a cluster algebra structure on $\SA(f)$ defined strictly in Floer-theoretical terms.} In short, the cluster algebra $\SA(f)$ could have been defined in terms of the moduli space of constructible sheaves microlocally supported in $\La$, instead of Floer theory.\\
		
		\item[(ii)] The $\Aug_+$-category is Floer-theoretical in nature, e.g. its morphisms are certain Floer homology groups. It would have also been natural to consider the partially wrapped Fukaya category $W(\C^2,\La_f)$, as defined \cite{GPS1,Sylvan19_PartiallyWrapped}, or the infinitesimal Fukaya category $Fuk(\C^2,\La)$ \cite{NadlerZaslow09,Nadler09}. These are Floer-theoretical Legendrian invariants associated to $\La_f$, and thus the singularity $f\in\C[x,y]$, which might be of interest on their own.
	\end{itemize} 
\end{remark}

%%%%%%%%%%%%%%%%%%%%%%%%%%%%%%%%%%%%%%%%%%%%%%%%%%%%%%%%%%%%%%%%%%%%%%%%%%%%%%%%%%%%%%%%%%%%%%%%%%%%%%%%%%%%%%%%%%%%%%%%%%%%%%%%%%%%%%%
\section{A few Computations and Remarks}\label{sec:computations}

Consider the derived dg-category $\Sh_\La(M)$ of constructible sheaves in a closed smooth manifold $M$ microlocally supported at a Legendrian link $\La\sse(\dd T^\infty M,\xi_\st)$, e.g. as introduced in \cite[Section 1]{STZ}. Equivalently, one may consider a conical Lagrangian $L\sse T^*M$ instead of $\La\sse(T^\infty M,\xi_\st)$; in practice, the input data is a wavefront $\pi(\La)\sse M$ \cite{Arnold90}. Let $\mush$ denote the sheaf of microlocal sheaves defined\footnote{Thanks go to V. Shende for helpful discussions on sheaf invariants.} in \cite[Section 5]{NadlerShende20}. There are two situations we consider, depending on whether the focus is on the Weinstein pair $(\C^2,\La_f)$ or on the Weinstein 4-manifold $W(\La_f)$:

\begin{itemize}
	\item[(i)] {\bf Sheaf Invariants of the Weinstein pair $(\C^2,\La_f)$.}\footnote{Invariance up to Weinstein homotopy \cite{CieliebakEliashberg12}, and also symplectomorphism of Liouville pairs.} The category of microlocal sheaves $\mush(\L(f))$ is an invariant of $(\C^2,\La_f)$, as established in \cite{GKS_Quantization,NadlerShende20,STZ}.\footnote{The category $\mush(\L(f))$ is likely {\it not} an invariant of the Weinstein 4-manifold $W(\La_f)$ itself.} In this case, the global sections $\mush(\L(f))$ is a category equivalent to the more familiar $\Sh_{\La(f)}(\R^2)$. For simplicity, we focus on the moduli stack $\SS(f)\sse \Ob(\Sh_{\La(f)}(\R^2))$ of {\it simple} sheaves, whose microlocal support is rank one, microlocally supported in the Legendrian link of an isolated plane curve singularity $f:\C^2\lr\C$. See \cite[Section 7.5]{KashiwaraSchapira_Book} or \cite[Section 1.10]{GKS_Quantization} for a detailed discussion on simple sheaves. In our case $\La=\La(f)$, $\SS(f)$ is an Artin stack of finite type \cite[Prop. 5.20]{STZ}, and typically is an algebraic variety or a $G$-quotient thereof, with $G=(\C^*)^k$ or $\GL(k,\C)$. Note that $\mush(\L(f))$ is equivalent to the wrapped Fukaya category of $\C^2$ stopped at $\La_f$ \cite{GPS3}.\\
	
	\item[(ii)] {\bf Sheaf Invariants of the Weinstein 4-manifold $W(\La_f)$.} The category $\mush(\overline{\L}(f))$ of microlocal sheaves \cite{NadlerShende20} on a Lagrangian skeleton $\overline{\L}(f)\sse W(\La_f)$ is an invariant of $W(\La_f)$, up to Weinstein homotopy \cite{NadlerShende20} and up to symplectomorphism \cite{GPS3}. This category is $\Sh_{\vartheta(f)}(\overline{M}_f)$, or $\mu loc(\overline{\L}(f))$, in the notation of \cite{STW_Combinatorics}, i.e. the global sections of the Kashiwara-Schapira sheaf of dg-categories \cite[Prop. 3.5]{STW_Combinatorics} on the Lagrangian skeleton $\overline{\L}(f)$. For simplicity, we focus on the moduli stack $\Theta(f)\sse \mush(\overline{\L}(f))$ of simple sheaves as well. Note that $\mush(\overline{\L}(f))$ is equivalent to the wrapped Fukaya category of $W(\La_f)$ by \cite{GPS3}.
\end{itemize} 

The moduli stack $\SS(f)$ in (i) is isomorphic to the stack of simple sheaves in $\Ob(\Sh_{\vartheta(f)}(M_f))$. This is because the union of $\R^2\sse T^*\R^2$ and the Lagrangian cone of $\La\sse(T^+\R^2,\xi_\st)$ is a Lagrangian skeleton for the relative Weinstein pair $(\C^2,\La)$, so is $\L(f)$ by Theorem \ref{thm:main}, and $\Ob(\Sh_{\vartheta(f)}(M_f))$ is an invariant of the Weinstein pair $(\C^2,\La)$, independent of the choice of Lagrangian skeleton. Thus, the difference between $\SS(f)$ and $\Theta(f)$ is at the boundary, which for $\SS(f)$ might give monodromy contributions (and these become trivial on $\Theta(f)$). In other words, since $\overline{\L}(f)$ is obtained from $\L(f)$ by attaching 2-disks (to close the boundary of the Milnor fiber $M_f$), the category $\mush(\overline{\L}(f))$ is a homotopy pull-back of $\mush(\L(f))$. In particular, the moduli stacks of simple microsheaves are related as above. 

\begin{remark} There are currently two methods for computing $\SS(f)$: either by direct means, as exemplified in \cite{STZ}, or by using the equivalence of categories $Aug_+(\La(f))\cong\Sh^{s}_{\La_f}(\R^2)$ from \cite[Theorem 1.3]{NRSSZ}, the latter being denoted by $\mathcal{C}_1(\La_f)$ in \cite{NRSSZ}. Thanks to the computational techniques available for augmentation varieties, the moduli of objects $\Ob(Aug_+(\La(f)))$ is readily computable for $(-1)$-framed closures of positive braids as in Section \ref{sec:cluster} above, confer Computation \ref{computation}. Similarly $\Theta(f)$ could be computed directly, or by means of the isomorphism to the wrapped Fukaya category\footnote{Should the reader be willing to use the surgery formula, this wrapped Fukaya category may be presented as modules over the Legendrian DGA of $\La_f$. (This is only informative and not needed for the present purposes.)} of $W(\La_f)$.\hfill$\Box$
\end{remark}

In this section, we take to opportunity to build on \cite{NadlerShende20,STW_Combinatorics} and perform an actual computation for a class of Cal-Skeleta coming from Theorem \ref{thm:main}.

\begin{center}
	\begin{figure}[h!]
		\centering
		\includegraphics[scale=0.7]{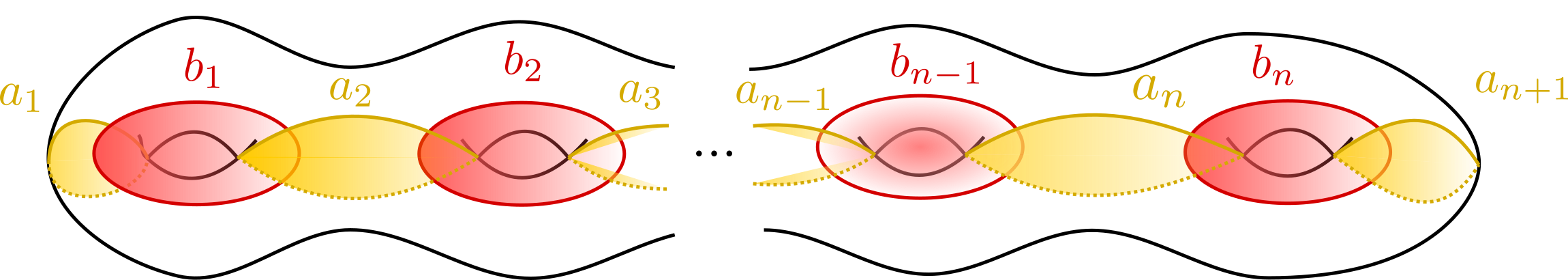}
		\caption{A Cal-skeleta $\overline{\L}(f_{2n+1})$ for the Weinstein 4-manifolds $W(\La(A_{2n+1}))$.}
		\label{fig:AnSkeleta}
	\end{figure}
\end{center}

\subsection{Cal-Skeleta for $A_n$-Singularities} Consider the $A_n$-singularity $f_n(x,y)=x^{n+1}+y^2$. The Legendrian $\La(A_n)\sse(\R^3,\xi_\st)$ associated to the singularity is the max-tb Legendrian $(2,n+1)$-torus link. By Theorem \ref{thm:main}, a Lagrangian skeleton $\L(f_n)$ for the Weinstein pair $(\C^2,\La_f)$ is obtained by attaching $n$ 2-disks to a $(3/2-(-1)^{n}/2)$--punctured $\floor{\frac{n-1}{2}}$--genus surface along an $A_n$-Dynkin chain of embedded curves. Similarly, Corollary \ref{cor:main} implies that a Lagrangian skeleton $\overline{\L}(f_n)$ for the Weinstein 4-manifold $W_n=W(\La(A_n))$ is given by attaching $n$ 2-disks to a $\floor{\frac{n-1}{2}}$--genus surface along an $A_n$-Dynkin chain, as depicted in orange in Figure \ref{fig:AnSheaves}, see also Figure \ref{fig:AnSkeleta}.

\begin{center}
	\begin{figure}[h!]
		\centering
		\includegraphics[scale=0.7]{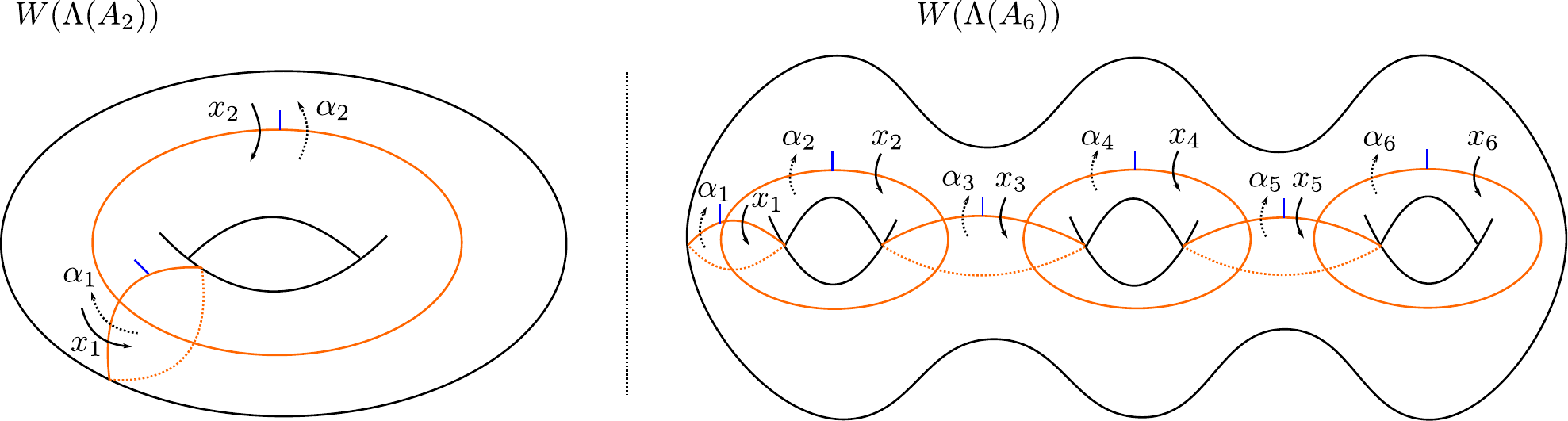}
		\caption{The Cal-skeleta $\overline{\L}(f)$ for the Weinstein 4-manifolds $W(\La(A_2))$ and $W(\La(A_6))$. The relative Cal-skeleta $\overline{\L}(f)$ for the corresponding Weinstein pairs $(\C^2,\La(A_2))$ and $(\C^2,\La(A_6))$ are obtained by introducing one puncture to the surfaces.}
		\label{fig:AnSheaves}
	\end{figure}
\end{center}

Let us compute $\Theta(f_n)$ for $n\in\N$ even, so that $\La(A_n)$ is a knot; the $n\in\N$ odd case is similar. The key technical tool is the Disk Lemma \cite[Lemma 4.2.3]{Karabas18}. The complement $\overline{M}_f\setminus \vartheta(f)$ of the vanishing cycles is a 2-disk, and the category of local systems is just $\C$-mod. Thus, the moduli of simple constructible sheaves on $\overline{M}_f$ microlocally supported on (the Legendrian lift of) the vanishing cycles $\vartheta(f)$ consists of a vector space $V=\C$ and maps $x_1,x_2,\ldots,x_n\in End(V)$, one associated to each vanishing cycle. This is depicted in Figure \ref{fig:AnSheaves} for $n=2,6$, and note that $n=|\vartheta(f)|$. Denote by $\overline{\L}(f_n)_0\sse T^*\overline{M}_f$ the Lagrangian skeleton given by $\overline{M}_f$ union the conormal lifts of $\vartheta(f)$. These maps are {\it not} necessarily invertible in $\mush(\overline{\L}(f_n)_0)$.

The skeleton $\overline{\L}(f_n)$ is obtained by attaching $n$ Lagrangian 2-disks to $\overline{\L}(f_n)_0$, i.e. $\overline{\L}(f_n)$ is the homotopy push-out of $\overline{\L}(f_n)_0$ and the disjoint union of $n$ 2-disks. In consequence, the category of microlocal sheaves on $\overline{\L}(f_n)$ is given by the homotopy pull-back of the category of microlocal sheaves on $\overline{\L}(f_n)_0$ and the category of microlocal sheaves on $n$ disjoint 2-disks (which are just copies of $\C$-mod). Attaching a 2-disk along a vanishing $V_i$ cycle in $\vartheta(f)$, $i\in[1,n]$, has the effect of trivializing the ``monodromy'' corresponding map $x_i$, as explained in \cite{STW_Combinatorics} and \cite[Section 4.2]{Karabas18}. Here, the monodromy\footnote{We had written ``monodromy'' in quotations because it is not a priori necessarily invertible.} is given by restricting a microlocal sheaf to (an arbitrarily small neighborhood of) $V_i$. Note that in this restriction, we land into a 1-dimensional Lagrangian skeleton given by a circle $V_i\cong S^1$ {\it union} conical segments coming from the adjacent vanishing cycles. Let us call $\gamma_i$ the composition of maps from $cone(x_i)$ to itself obtained by going around $V_i$, each of the maps coming from traversing a segment. Then, the trivialization is a homotopy to the identity, and it translates into adding a map $\alpha_i$ such that $x_i\alpha_i-1=\gamma_i$.

\begin{example} Consider the map $x_1$ in Figure \ref{fig:AnSheaves} (Left), which is depicted transversely to the vanishing cycle $V_1$. The restriction of a microlocal sheaf to a neighborhood of $V_1$ gives a microlocal sheaf for the skeleton $\S^1\cup T_p^{*,+}\S^1\sse T^*\S^1$, where $T_p^{*,+}\S^1$ is the positive half of the cotangent fiber at a point $p\in\S^1$. Such a microlocal sheaf is described by a (complex of) vector space(s) and an endomorphism. In this case the vector space is $V=\C$ and this endomorphism is identified with $\gamma_1=x_2$. Hence, trivializing along $V_1$ adds a map $\alpha_1\in End(\C)$, which we can think of as a variable $\alpha_1\in\C$, such that $x_1\alpha_1+1=-x_2$. Similarly, trivializing along $V_2$, with $\gamma_2=-\alpha_1$, adds a variable $\a_2\in\C$ such that $1+x_2\a_2=-\a_1$. Hence $\Theta(f)$ is the affine variety
$$\Theta(f_3)=\{(x,y,z)\in\C^3:xyz+x-z-1=0\}.$$
This affine variety appears in the study of isomonodromic deformations of the Painlev\'e I equation \cite[Section 3.10]{vanderPutSaito09}, see also \cite[Section 5]{Boalch18}. \hfill$\Box$
\end{example}

The vanishing cycles $V_1,V_n$ have simpler monodromies $\gamma_1,\gamma_n$, as they only intersect {\it one} other vanishing cycle. Adding the 2-disks to the skeleton $\overline{\L}(f_n)_0$ along $V_1,V_n$ yields a category of microlocal sheaves whose moduli space of simple objects is described by that of $\overline{\L}(f_n)_0$ and the two equations $x_1\alpha_1+1=-x_2$ and $x_n\alpha_n+1=-\alpha_{n-1}$. For each of the middle vanishing cycles $V_i$, $2\leq i\leq n-1$, we have the monodromy $\gamma_i=\alpha_{i-1}x_{i+1}$. In consequence, attaching the $n$ 2-disks $\overline{\L}(f_n)_0$ along all the curves $V_i$, $i\in[1,n]$, leads to the moduli space
$$\Theta(f)\cong\{(x_i,\a_i)\in(\C^2)^n: x_1\alpha_1+1=-x_2, x_n\alpha_n+1=-\alpha_{n-1}, 1+x_j\a_j=\alpha_{j-1}x_{j+1}, j\in[2,n-1]\}.$$

%\subsection{The $\La(A_n)$-Legendrians} Instead of directly computing the moduli of simple microlocal sheaves for the skeleton $\L(f)$, we can focus on invariants of the Legendrian knot $\La(A_n)$, $n\in\N$ odd. They relate to $\Theta(f_n)$ according to \cite{FILLIN}. There are two basic Legendrian invariants: the moduli of rank-one sheaves $\ob(\Sh_{\La(A_n)}(\R^2))$, or the augmentation variety\footnote{Since $\La(A_n)$ is a knot that admits a front with non-negatively graded Reeb chords, this augmentation variety, computed with one base point, coincides with the moduli space of objects of the $Aug_+(\La(A_n))$ category \cite{NRSSZ}.} of $\La(A_n)$, with one base point.\footnote{The base point must be augmented to $-1$ by \cite{FILLIN}.} By \cite{STWZ}, the former computation leads to the open positroid variety $\{p\in\mbox{Gr}(2,n+3): P_{i,i+1}(p)\neq0\}$, where $P_{i,j}$ is the Pl\"ucker coordinate given by the minor at the $i$ and $j$ columns, and the index $i$ is understood $\Z/(n+3)$-cyclically. By \cite{KalmanFILL}, the augmentation variety of the Legendrian knot $\La(A_n)$ is the affine subvariety of $\C^n$ cut out by the equations
%$$\displaystyle\prod_{i=1}^n\left(\begin{array}{cc}
%0 & 1\\
%1 & z_i\end{array}\right)=\left(\begin{array}{cc}
%1 & 0\\
%0 & 1\end{array}\right).$$

\begin{remark}
Consider $(n+3)$-tuples of vectors $(v_1,\ldots,v_{n+3})\in\C^2$, modulo $\GL_2(\C)$, the equations for $\Theta(f)$ above can be read directly by writing the $(n+3)$-tuple as

$$\left(\begin{array}{c}
1\\
0\end{array}\right),
\left(\begin{array}{c}
1\\
0\end{array}\right),
\left(\begin{array}{c}
-1\\
x_1\end{array}\right),
\left(\begin{array}{c}
\a_1\\
x_2\end{array}\right),
\left(\begin{array}{c}
\a_2\\
x_3\end{array}\right),
\left(\begin{array}{c}
\a_3\\
x_4\end{array}\right),
\left(\begin{array}{c}
\a_4\\
x_5\end{array}\right),
\ldots,
\left(\begin{array}{c}
\a_{n-1}\\
x_{n}\end{array}\right),
\left(\begin{array}{c}
\a_n\\
-1\end{array}\right),$$
and imposing $v_i\wedge v_{i+1}=1$, where we have use the $GL_2(\C)$ gauge group to trivialize the first two vectors, and one component of the third and last vectors. P. Boalch \cite{Boalch18} names this moduli stack after Y. Sibuya \cite{Sibuya75}. Note that \cite[Section 5]{Boalch18} points out that some of these equations were initially discovered by L. Euler in 1764 \cite{Euler1764}. In the context of open Bott-Samelson cells \cite{ShenWeng19,STWZ}, these spaces appear as the open positroid varieties $\{p\in\mbox{Gr}(2,n+3): P_{i,i+1}(p)\neq0\}$, where $P_{i,j}$ is the Pl\"ucker coordinate given by the minor at the $i$ and $j$ columns, and the index $i$ is understood $\Z/(n+3)$-cyclically.\hfill$\Box$
\end{remark}

Finally, we notice that the cohomology $H^*(\Theta(f),\C)$, or that of $H^*(\SS(f),\C)$, can be an interesting invariant \cite[Section 6]{STZ}. For the case of $A_n$-singularities, we can use the fact that these are actually cluster varieties of $A_n$-type in order to compute their cohomology using \cite{LamSpeyer16}. For $n=2m\in\N$ even, and removing any $\C^*$-factors coming from frozen variables, one obtains that the Abelian graded cohomology group is isomorphic to $\Q[t]/t^{m+1}$, $|t|=2$. In general, the mixed Hodge structure for these moduli spaces can be non-trivial, but for singularities of $A_n$-type, these cohomologies are of Hodge-Tate type, and entirely concentrated in $H^{k,(k,k)}$.

\begin{remark} It would be valuable to understand the relation between sheaf invariants of a singularity $f\in\C[x,y]$, such as $\mush(\L(f))$ and $\mush(\overline{\L}(f))$, and classical invariants from singularity theory \cite{ACampo98,AGZV1,AGZV2}. In particular, it could be valuable to develop more systematic methods to compute $\mush(\L(f))$ and $\mush(\overline{\L}(f))$ both directly and from a divide.\hfill$\Box$
\end{remark}

%%%%%%%%%%%%%%%%%%%%%%%%%%%%%%%%%%%%%%%%%%%%%%%%%%%%%%%%%%%%%%%%%%%%%%%%%%%%%%%%%%%%%%%%%%%%%%%%%%%%%%%%%%%%%%%%%%%%%%%%%%%%%%%%%%%%%%%

%\subsection{Adjacency of Singularities}

%https://books.google.com/books?id=FmtztRqFps0C&pg=PA39&lpg=PA39&dq=distinguished+basis+adjacent+signularities&source=bl&ots=jKYQPxrd5Y&sig=ACfU3U3OQPHh5CWqG2m-Ih3Izx874hhWWw&hl=en&sa=X&ved=2ahUKEwjUm72drcXoAhUMvZ4KHTJPCb4Q6AEwAHoECAkQAQ#v=onepage&q=distinguished%20basis%20adjacent%20signularities&f=false

\section{Structural Conjectures on Lagrangian Fillings}\label{sec:conj}

Let $\La\sse(\S^3,\xi_\st)$ be a max-tb Legendrian link. The classification of embedded exact Lagrangian fillings $L\sse(\D^4,\la_\st)$ with fixed boundary $\La$, up to Hamiltonian isotopy, is a central question. The only Legendrian $\La$ for which a complete classification exists is the standard unknot \cite{EliashbergPolterovich96}. In this case, the standard Lagrangian flat disk is the unique filling: there is precisely {\it one} exact Lagrangian filling, up to Hamiltonian isotopy. The recent developments \cite{CasalsHonghao,CasalsNg20,CasalsZaslow,GSW20_1} show that such finiteness is actually rare: e.g. the max-tb torus links $(n,m)$ admit {\it infinitely} many exact Lagrangian filling, up to Hamiltonian isotopy, if $n,m\geq4$. This final section states and discusses Conjectures \ref{conj:ADE} and \ref{conj:BCFG}, which might help in the classification of exact Lagrangian fillings of Legendrian links.

{\bf Geometric Strategy}. Given $\La\sse(\S^3,\xi_\st)$, we would like to know whether it admits finitely many Lagrangian fillings or not, and in the finite case provide the exact count. Theorem \ref{thm:main} provides insight for the class of Legendrian links $\La\sse(\S^3,\xi_\st)$ that are algebraic links and, more generally, arise from a divide. Indeed, Lagrangian fillings for $\La$ can be constructed by using the Lagrangian skeleta for the Weinstein pair $(\C^2,\La)$ built in the statement. For instance, the inclusion of the Milnor fiber $M_{\wt f}\sse\L_{\wt f}$ provides an exact Lagrangian filling, and performing Lagrangian disk surgeries along the Lagrangian 2-disks in $\L_{\wt f}\setminus M_{\wt f}$, which bound vanishing cycles, will potentially yield new Lagrangian fillings. This strategy can be implemented in certain cases but, in general, one must be able to find an {\it embedded} Lagrangian disk in the new Lagrangian skeleton (with an embedded boundary curve), in order to perform the next Lagrangian disk surgery. Curves being immersed rather than embedded\footnote{Equivalently, the existence of curves with zero algebraic intersection but non-empty geometric intersection.}, might a priori represent a challenge.\footnote{The vanishing cycles can be organized as a quiver $Q$, the additional data of a superpotential $(Q,W)$ should be helpful in solving the disparity between {\it immersed} and {\it embedded} curves in the Milnor fiber.} This geometric scheme has the following algebraic incarnation.

{\bf Algebraic Strategy}.  Consider the intersection quiver $Q_{\vartheta(\wt f)}$ of vanishing cycles for a real Morsification $\wt f$, Lagrangian disk surgeries induce mutations of the quiver \cite{STW_Combinatorics} and the (microlocal) monodromies of a local system serve as cluster $\mathcal{X}$-variables \cite{CasalsZaslow,STWZ}. Thus, the cluster algebra $\SA(Q(f))$ associated to the quiver, as it appears in \cite{Morsification_FPST}, governs {\it possible} exact Lagrangian fillings for the Legendrian link $\La$. That is, a Lagrangian filling $L\sse(\D^4,\la_\st)$ yields a cluster chart for this algebra \cite{GSW20_1,STWZ}, and the Lagrangian skeleta from Theorem \ref{thm:main} provide a geometric realization for the quiver in the form of an exact Lagrangian filling with ambient Lagrangian disks ending on it.

The recent developments \cite{CasalsHonghao,GSW20_1,STW_Combinatorics,STWZ} and the existence of the Lagrangian skeleta in Theorem \ref{thm:main} shyly hint towards the fact that, possibly, Lagrangian fillings {\it are} classified by the cluster algebra $\SA(Q(f))$. That is, every cluster chart in $\SA(Q(f))$ is induced by {\it precisely} one exact Lagrangian filling.\footnote{That is, two Lagrangian fillings inducing the same cluster chart in $\SA(Q(f))$ are Hamiltonian isotopic {\it and} every cluster chart is induced by at least one Lagrangian filling.} It should be emphasized that this is {\it not} known for any $\La\sse(\R^3,\xi_\st)$ except the standard Legendrian unknot. It is possible that the case of the Hopf link $\La(A_1)$ can be solved by building on the techniques in \cite{Rizell19_Whitney}, which classifies exact Lagrangian tori near the Whitney sphere\footnote{See also \cite{CoteRizell20}, which appeared during the writing of this manuscript.}. Having informed the reader on the currently available evidence, the following conjectural guide might be helpful.

\begin{conj}[ADE Classification of Lagrangian Fillings]\label{conj:ADE} Let $\La\sse(\R^3,\xi_\st)$ be the Legendrian rainbow closure of a positive braid such that the mutable part of its brick quiver is connected. Then one of the following possibilities occur:
	
	\begin{itemize}
		\item[1.] $\La$ is smoothly isotopic to the link of the $A_{n}$-singularity.\\ Then $\La$ has precisely $\frac{1}{n+2}{2n+2\choose n+1}$ exact Lagrangian fillings.\\
		
		\item[2.] $\La$ is smoothly isotopic to the link of the $D_{n}$-singularity.\\
		Then $\La$ has precisely $\frac{3n-2}{n}{2n-2\choose n-1}$ exact Lagrangian fillings.\\
		
		\item[3.] $\La$ is smoothly isotopic to the link of the $E_6$, $E_7$ or the $E_8$-singularities.\\
		Then $\La$ has precisely 833, 4160, and 25080 exact Lagrangian fillings, respectively.\\
		
		\item[4.] $\La$ has infinitely many exact Lagrangian fillings.
	\end{itemize}
\end{conj}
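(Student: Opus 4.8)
The plan is to promote the Geometric and Algebraic Strategies above to an honest bijection between exact Lagrangian fillings of $\La$, up to Hamiltonian isotopy, and seeds of the Fomin--Pylyavskyy--Shustin--Thurston cluster algebra $\SA(Q(f))$ \cite{Morsification_FPST} attached to the intersection quiver $Q_{\vartheta(\wt f)}$ of a real Morsification, and then to deduce the four cases from the classification of finite-type cluster algebras. Throughout one fixes a real Morsification $\wt f$ and the Cal-skeleton $\bL(\wt f)=M_{\wt f}\cup\vartheta(\wt f)$ of Theorem \ref{thm:main}, so that $M_{\wt f}$ is a distinguished exact Lagrangian filling of $\La=\La_f$ and each thimble in $\vartheta(\wt f)$ is an ambient Lagrangian $\D^2$ bounding a vanishing cycle on $M_{\wt f}$.

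\emph{Seeds to fillings, and separating fillings.} When a boundary vanishing cycle is embedded, Lagrangian disk surgery \cite{STW_Combinatorics,Yau17_Surgery} along the corresponding thimble produces a new filling and performs a quiver mutation of $Q_{\vartheta(\wt f)}$ at that vertex; iterating, every seed reachable from $Q_{\vartheta(\wt f)}$ by \emph{admissible} (embedded-curve) mutations is realized by a filling. To see that in the ADE cases \emph{all} mutations are admissible, one uses that a connected mutable brick quiver of a rainbow closure is, away from frozen vertices, mutation-equivalent to an acyclic Dynkin quiver, and tracks the square-move combinatorics of \cite{CasalsHonghao} to keep the relevant curves embedded (the superpotential $(Q,W)$ alluded to in the Geometric Strategy should make precise why the no-cycle condition rules out the immersedness obstruction). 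To separate the resulting fillings, recall that a filling $L$ determines an object of $\Aug_+(\La_f)$, hence a point of the moduli stack $\Ob(\Aug_+(\La_f))$, which by \cite{GSW20_1} together with the characteristic-zero lift of \cite{CasalsNg20} carries the cluster structure of $\SA(Q(f))$; the filling cuts out a toric chart whose $\mathcal{X}$-coordinates are the microlocal monodromies along $\vartheta(\wt f)$ \cite{CasalsZaslow,STWZ}, and distinct seeds give distinct charts. Thus the assignment seed $\mapsto$ filling is injective up to Hamiltonian isotopy, and the same invariant shows that the infinitely many fillings produced in case 4 (generalizing the torus-link construction of \cite{CasalsHonghao}) are pairwise non-isotopic.

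\emph{The main obstacle: exhaustiveness.} The hard part is the reverse direction: that \emph{every} exact Lagrangian filling of $\La_f$ is obtained from $M_{\wt f}$ by iterated Lagrangian disk surgery, equivalently that the filling graph is connected and realizes precisely the exchange graph of $\SA(Q(f))$. One would first use exactness to constrain the topology of a filling $L$, forcing $H_1(L)$ to be carried by classes of vanishing cycles, and then prove the genuinely new statement that the chart cut out by $L$ in $\Ob(\Aug_+(\La_f))$ is an actual cluster chart rather than an arbitrary algebraic torus. This rigidity is unknown for any Legendrian other than the unknot \cite{EliashbergPolterovich96}, it is the true content of the conjecture, and it would presumably require new Fukaya-categorical input, such as a generation result for the wrapped category of $W(\C^2,\La_f)$ by $M_{\wt f}$ together with the thimbles, or an argument in the spirit of \cite{Rizell19_Whitney} adapted to these skeleta.

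\emph{The trichotomy and the enumeration.} Granting the bijection, the four cases follow from cluster theory. A connected quiver has finitely many seeds if and only if it is mutation-equivalent to an $A$-, $D$-, or $E$-type Dynkin quiver; otherwise there are infinitely many seeds, hence infinitely many fillings by the previous paragraphs, which is case 4. In the finite-type cases one checks, via the divide model of Section \ref{ssec:ACampoDivide} and A'Campo's vanishing-cycle algorithm \cite{ACampo99} (as illustrated in Example \ref{ex:ADESingularities}), that $Q_{\vartheta(\wt f)}$ is mutation-equivalent to the Dynkin quiver named by the smooth type of $\La$, so the number of fillings equals the number of clusters in that finite type: $\frac{1}{n+2}\binom{2n+2}{n+1}$ for $A_n$, $\frac{3n-2}{n}\binom{2n-2}{n-1}$ for $D_n$, and $833$, $4160$, $25080$ for $E_6$, $E_7$, $E_8$, matching Conjecture \ref{conj:ADE}.
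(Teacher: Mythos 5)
The statement you were asked to prove is not a theorem in the paper but an open conjecture (Conjecture~\ref{conj:ADE}); the paper offers no proof, only a discussion of strategies, partial results, and evidence. Your write-up is not a proof either, and to your credit you say so explicitly: you correctly identify that ``exhaustiveness'' --- showing that every exact Lagrangian filling arises from the Milnor fiber by iterated disk surgery, equivalently that every chart in the augmentation stack is a cluster chart --- is the genuine obstruction, and that it is open for every Legendrian except the standard unknot. That honest diagnosis is precisely the one the paper gives in its comment (iii), so on the central point your discussion mirrors the paper's own.

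There are, however, two places where you overstate what is currently known, even for the ``easy'' direction. First, your claim that in the ADE cases all mutations are admissible (i.e.\ that one can always keep the relevant vanishing cycles embedded so that Lagrangian disk surgery applies) is not established; the paper flags exactly this immersedness obstruction as a potential issue and only gestures at a superpotential $(Q,W)$ as something that ``should be helpful,'' not as a resolved step. Saying it ``should make precise why the no-cycle condition rules out the obstruction'' does not discharge it. Second, the lower bound --- constructing the stated numbers of pairwise distinct fillings --- is only actually known in the $A_n$ case (Pan's Catalan count, reproved in the sheaf literature). For $D_n$, $E_6$, $E_7$, $E_8$ the paper states that even producing and distinguishing the claimed numbers of fillings has not been carried out; your text treats this as done by appeal to cluster charts and microlocal monodromies, which is the expected mechanism but not yet a theorem. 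Finally, case~4 is the one part that is essentially settled in the literature the paper cites (\cite{CasalsHonghao,GSW20_1,GSW20_2}); you could have leaned on that more directly rather than presenting it as a consequence of the conjectural bijection. In short: your outline is a faithful reorganization of the paper's own strategic discussion, with the main gap correctly located, but several intermediate steps you present as routine are themselves open.
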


The following comments are in order:

\begin{itemize}
	\item[(i)] In \cite{FominZelevinsky03_Finite}, S. Fomin and A. Zelevinsky classify cluster algebras of {\it finite} type. This is an ADE-classification, parallel to the classification of simple singularities \cite{AGZV1}, the Cartan-Killing classification of semisimple Lie algebras, finite crystallographic root systems (via Dynkin diagrams) and the like. Thus, Conjecture \ref{conj:ADE} first states that $\La$ will have {\it finitely} many exact Lagrangian fillings, up to Hamiltonian isotopy, if and only if the associated quiver is ADE.\\
	
	\item[(ii)] The case of $\La=\La_f$ an algebraic link associated to a non-simple singularity $f\in\C[x,y]$ of a plane curve follows from \cite{CasalsHonghao}, and the case of a Legendrian $\La$ with a non-ADE underlying quiver has recently been proven in \cite{GSW20_2}. These approaches are based on the following fact: if there exists an embedded exact Lagrangian cobordism from $\La_-$ to $\La_+$ and $\La_-$ admits infinitely many Lagrangian fillings, then so does $\La_+$. See \cite{CasalsNg20,PanCatalan_AugmentationMap} and \cite[Section 6]{CasalsHonghao}. This itself initiates the quest for finding the {\it smallest} Legendrian link which admits infinitely many exact Lagrangian fillings. At present, if we measure the size of a link $\La$ as $\pi_0(\La)+g(\La)$, $g(\La)$ the (minimal) genus of a (any) embedded Lagrangian filling, the smallest known Legendrian link has $g(\La)=1$ and two components $\pi_0(\La)=2$. Intuitively, it is the geometric link corresponding to the $\wt A_{1,1}$ cluster algebra.\\
	
	\item[(iii)] According to (ii) above, the missing ingredient for Conjecture \ref{conj:ADE} is showing that (1), (2) and (3) hold. For the $A_n$-case (1), it is known that there are {\it at least} the stated Catalan number worth of exact Lagrangian fillings, distinct up to Hamiltonian isotopy. This was originally proven by Y. Pan \cite{PanCatalanFillings} and subsequently understood in \cite{STWZ,TreumannZaslow} from the perspective of microlocal sheaf theory. It remains to show that any exact Lagrangian filling of $\La(A_n)$ is Hamiltonian isotopic to one of those; the first unsolved case is the Hopf link $\La(A_1)$ having exactly two embedded exact Lagrangian fillings.\footnote{In particular, this would show that the {\it two} possible Polterovich surgeries \cite{Polterovich91} of a 2-dimensional Lagrangian node are the only two exact Lagrangian cylinders near the node, up to Hamiltonian isotopy.} For the $\La(D_n),\La(E_6),\La(E_7)$ and $\La(E_8)$ cases in Conjecture \ref{conj:ADE}, one needs to first find the corresponding number of distinct Lagrangian fillings, and then show these are all. The construction part should be relatively accessible, in the spirit of either \cite{CasalsZaslow,PanCatalanFillings,STWZ}, and it is reasonable to suspect that these many fillings can be distinguished using either augmentations or microlocal monodromies.\footnote{Showing these exhaust all fillings, up to Hamiltonian isotopy, is another matter, possibly much more challenging.}\\
	
	\item[(iv)] The numbers appearing in Conjecture \ref{conj:ADE}.(i)-(iii) are the number of cluster seeds for the corresponding cluster algebra. Precisely, consider a root system of Cartan-Killing type $X_n$, $e_1,\ldots,e_n$ its exponents and $h$ the Coxeter number. Then the numbers in Conjecture \ref{conj:ADE} are $N(X_n)=\prod_{i=1}^n(e_i+h+1)(e_i+1)^{-1}$ for $X_n=A_n,D_n,E_6,E_7,E_8$.
\end{itemize}

The brick graph of a positive braid is defined in \cite{BLL,Rudolph92_AnnuliIV}, it can be enhanced to a quiver, which we call the brick quiver, following the algorithm in \cite[Section 3.1]{ShenWeng19} or \cite[Section 4.2]{GSW20_1}, which itself generalizes the wiring diagram construction in \cite{BFZ05,FockGoncharov06_Amalgamation}.

\begin{remark} The hypothesis of the mutable part of its brick quiver being connected is necessary. We could otherwise add a meridian to any positive braid, which would create a disconnected quiver; the resulting cluster algebra would be a product with $A_1$, which preserves being of finite type. It stands to reason that adding a meridian to a Legendrian link $\La$ would yield a Legendrian link $\La\cup\mu$ with exactly {\it twice} as many Lagrangian fillings. It is clear that there are at least twice as many Lagrangian fillings for $\La\cup\mu$, as there are two distinct Lagrangian cobordisms from $\La$ to $\La\cup\mu$. The simplest case is $\La=\La_0$ the standard Legendrian unknot and $\La\cup\mu\cong\La(A_1)$ the Hopf link, which should have $2=2\cdot1$ Lagrangian fillings, in accordance with Conjecture \ref{conj:ADE}. The next case would be $\La=\La(A_1)$, so that $\La(A_1)\cup\mu\cong\La(D_2)$, in line with $\La(D_2)$ conjecturally having $4=2\cdot 2$ Lagrangian fillings.\hfill$\Box$
\end{remark}

Note that the article \cite{CasalsNg20} has provided the first examples of Legendrian links $\La\sse(\S^3,\xi_\st)$ which are {\it not} rainbow closures of positive braids and yet they admit infinitely many Lagrangian fillings, up to Hamiltonian isotopy. These Legendrian links have components which are stabilized, not max-tb, and thus they cannot be rainbow closures of any positive braid. It would be interesting to extend Conjecture \ref{conj:ADE} to a larger class of links, possibly including $(-1)$-framed closures of positive braids, as studied in \cite{CasalsNg20}.\\

\begin{remark}
To the author's knowledge, \cite{EliashbergPolterovich96,PanCatalanFillings}, Theorem \ref{thm:main}, and the recent \cite{CasalsHonghao,CasalsZaslow,CasalsNg20,GSW20_1,GSW20_2}, constitute the current evidence towards Conjecture \ref{conj:ADE}. That said, parts of Conjecture \ref{conj:ADE} might have appeared in the symplectic folklore in one form or another. The advent of Symplectic Field Theory led to the mantra of ``pseudoholomorphic curves or nothing''\footnote{That is, if pseudoholomorphic invariants cannot distinguish two objects, they must be equal.}, the subsequent arrival of microlocal sheaf theory to symplectic topology led to ``sheaves or nothing''. In the current zeitgeist, cluster algebras provide a new algebraic invariant that one might hope to be complete.\footnote{As with the previous two cases, there is no particularly hard evidence for ``cluster algebras or nothing''.} In this sense, I would like to mention Y. Eliashberg, D. Treumann, H. Gao, D. Weng and L. Shen as some of the colleagues which might have also discussed or hinted towards parts of Conjecture \ref{conj:ADE}.\hfill$\Box$
\end{remark}

Finally, an ADE-classification is often part of a larger classification\footnote{The larger classification is an ABCDEFG-classification, which admittedly does not roll off the tongue.}, involving a few additional families. For instance, simple Lie algebras are classified by connected Dynkin diagrams, which are $A_n,D_n,E_6,E_7,E_8$, known as the simply laced Lie algebras, and $B_n,C_n,F_4$ and $G_2$. These latter cases, $B_n,C_n,F_4$ and $G_2$, are interesting on their own right. For instance, simple singularities are classified according to $A_n,D_n,E_6,E_7,E_8$, and $B_n,C_n,F_4$ then arise in the classification of simple {\it boundary} singularities \cite[Chapter 17.4]{AGZV1}, as shown in \cite[Chapter 5.2]{AGZV2}. (See also D. Bennequin's \cite[Section 8]{Bennequin86} and \cite{Arnold78}.) In general, the tenet is that $B_n,C_n,F_4$ and $G_2$ arise when classifying the same objects as in the ADE-classification {\it with the additional} data of a symmetry.\footnote{The study of boundary singularities can be understood as the study of singularities taking into account a certain $\Z_2$-symmetry.} This a perspective (and technique) called {\it folding}, ubiquitous in the study of $B_n,C_n,F_4,G_2$, which is developed in \cite[Section 2.4]{FominZelevinsky03} for the case of cluster algebras.

Let us consider a Legendrian $\La\sse(\R^3,\xi_\st)$, a Lagrangian filling $L\sse(\R^4,\la_\st)$, $\dd L=\La$, and a finite group $G$ acting faithfully on $(\R^4,\la_\st)$ by exact symplectomorphisms, inducing an action on the boundary piece $(\R^3,\xi_\st)$ by contactomorphisms. For instance, $s:\R^4\lr\R^4$, $s(x,y,z,w)=(-x,-y,z,w)$ is an involutive symplectomorphism which restricts to the contactomorphism $(x,y,z)\mapsto(-x,-y,z)$ on its boundary piece $(\R^3,\ker\{dz-ydx\})$. Let us define an exact Lagrangian $G$-filling of $\La$ to be an exact Lagrangian filling $L$ of $\La$ such that $G(L)=L$ and $G(\La)=\La$ setwise. Also, by definition, we say $\La\sse(\R^3,\xi_\st)$ admits a $G$-symmetry if there exists a faithful action of $G$ by contactomorphisms on $(\R^3,\xi_\st)$ such that $G(\La)=\La$ setwise. Examples of such symmetries can be readily drawn in the front projection, as shown in Figure \ref{fig:BoundarySing} for $\La(A_9),\La(D_8),\La(E_6)$ and $\La(D_4)$. Following the tenet above, the following classification might be plausible:

\begin{conj}[BCFG Classification of Lagrangian Fillings]\label{conj:BCFG} Let $\La(\beta)\sse(\S^3,\xi_\st)$ the Legendrian rainbow closure of a positive braid $\beta$:
	
	\begin{itemize}
		\item[1.] $(B_n)$ If $\La(\beta)=\La(A_{2n-1})$, the $\Z_2$-symmetry $(x,z)\lr(-x,z)$ for the front depicted in Figure \ref{fig:BoundarySing} lifts to a $\Z_2$-symmetry of $\La(A_{2n-1})$. Then $\La(A_{2n-1})$ has precisely ${2n\choose n}$ exact Lagrangian $\Z_2$-fillings.\\
		
		\item[2.] $(C_n)$ If $\La(\beta)=\La(D_{n+1})$, the $\Z_2$-symmetry $(x,z)\lr(-x,z)$ for the front depicted in Figure \ref{fig:BoundarySing} lifts to a $\Z_2$-symmetry of $\La(D_{n+1})$. Then $\La(D_{n+1})$ has precisely ${2n\choose n}$ exact Lagrangian $\Z_2$-fillings.\\
		
		\item[3.] $(F_4)$ If $\La(\beta)=\La(E_6)$, the $\Z_2$-symmetry $(x,z)\lr(-x,z)$ in the front depicted in Figure \ref{fig:BoundarySing} lifts to a $\Z_2$-symmetry of $\La(E_6)$. Then $\La(E_6)$ has precisely $105$ exact Lagrangian $\Z_2$-fillings.\\
		
		\item[4.] $(G_2)$ If $\La(\beta)=\La(D_4)$, the $\Z_3$-symmetry in the front depicted in Figure \ref{fig:BoundarySing} lifts to a $\Z_3$-symmetry of $\La(D^4)$. Then $\La(D_4)$ has precisely $8$ exact Lagrangian $\Z_3$-fillings.\\
	\end{itemize}
\end{conj}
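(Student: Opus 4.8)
The plan is to deduce Conjecture~\ref{conj:BCFG} from Conjecture~\ref{conj:ADE} by the method of \emph{folding}, carried out simultaneously on the symplectic and the cluster-algebraic sides. First, for each of the four cases I would verify that the front symmetry of Figure~\ref{fig:BoundarySing} lifts to a genuine finite-order exact symplectomorphism $G$ of $(\R^4,\la_\st)$ whose restriction to the contact boundary is a contactomorphism preserving $\La(\beta)$ setwise; for the involution $(x,y,z,w)\mapsto(-x,-y,z,w)$ and its $\Z_3$ analogue this is immediate from the explicit models, and a $G$-invariant real Morsification $\wt f$ exhibits the $G$-symmetric Milnor fiber $M_{\wt f}$ as a distinguished exact Lagrangian $G$-filling. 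The intersection quiver $Q_{\vartheta(\wt f)}$ of the vanishing cycles then inherits a quiver automorphism from $G$, and I would check that this automorphism is precisely the Dynkin folding $A_{2n-1}\rightsquigarrow B_n$, $D_{n+1}\rightsquigarrow C_n$, $E_6\rightsquigarrow F_4$, $D_4\rightsquigarrow G_2$ on the brick quiver, matching the conventions of \cite[Section 2.4]{FominZelevinsky03}.

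\textbf{Equivariant surgery and the folded cluster structure.} Next I would upgrade the Lagrangian disk surgery picture underlying Theorem~\ref{thm:main} to a $G$-equivariant one: a $G$-orbit of pairwise disjoint vanishing thimbles in $\L(\wt f)\setminus M_{\wt f}$ can be surgered simultaneously, and such an equivariant surgery realizes a single mutation of the folded quiver, in parallel with the treatment of folded mutations in \cite{FominZelevinsky03,STW_Combinatorics}. The microlocal monodromies of a $G$-equivariant local system along the vanishing cycles then serve as cluster $\mathcal X$-variables for the folded cluster algebra, which by \cite{FominZelevinsky03_Finite} is of finite type $B_n$, $C_n$, $F_4$ or $G_2$, and whose number of seeds equals $\prod_{i=1}^n(e_i+h+1)(e_i+1)^{-1}$, i.e. $\binom{2n}{n}$, $\binom{2n}{n}$, $105$ and $8$ respectively, in agreement with the statement. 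Starting from $M_{\wt f}$ and running all admissible sequences of $G$-equivariant disk surgeries would then produce one exact Lagrangian $G$-filling per $G$-invariant seed, and these should be pairwise non-isotopic through $G$-equivariant Hamiltonian isotopies because the induced $G$-invariant augmentations — equivalently, the folded microlocal monodromies — take distinct values; this is an equivariant refinement of the distinguishing arguments of \cite{CasalsHonghao,CasalsZaslow,STWZ,GSW20_1}.

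\textbf{Exhaustion, and the main obstacle.} The remaining step is to show that every exact Lagrangian $G$-filling of $\La(\beta)$ is $G$-equivariantly Hamiltonian isotopic to one of those produced above. This is the $G$-equivariant analogue of the exhaustion step of Conjecture~\ref{conj:ADE}, and it is where I expect the real difficulty to lie: the non-equivariant exhaustion is itself open — the smallest open instance being the two fillings of the Hopf link $\La(A_1)$ — so Conjecture~\ref{conj:BCFG} must be regarded as conditional on that part of Conjecture~\ref{conj:ADE}, together with two genuinely new issues. The first is that a $G$-filling need not admit a $G$-equivariant handle decomposition compatible with any divide, so one must rule out exotic equivariant fillings, presumably via an equivariant version of the microlocal-sheaf/augmentation invariants combined with an equivariant $h$-principle for Lagrangian disks in the complement of the Milnor fiber. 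The second is that the fixed-point locus of $G$ inside a filling — a Lagrangian-with-boundary lying in the fixed symplectic submanifold — can obstruct the next equivariant disk surgery precisely when a $G$-orbit of vanishing cycles fails to be disjoint; controlling this is exactly what the potential data $(Q,W)$ flagged in the Geometric Strategy is meant to accomplish. I would therefore aim to run the entire folding argument inside the $G$-equivariant (partially) wrapped Fukaya category of $(\C^2,\La(\beta))$, where mutation, finiteness of type, and the seed count all follow formally once Conjecture~\ref{conj:ADE} is in hand.
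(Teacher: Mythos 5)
The statement is a \emph{conjecture}, and the paper gives no proof of it --- only heuristic discussion after the statement. There is therefore no paper proof to compare against. Your sketch is, in its broad outlines, consistent with that discussion: the folding philosophy imported from \cite[Section 2.4]{FominZelevinsky03}, the $G$-equivariant adaptation of the Pan construction \cite{PanCatalanFillings}, the generalized Catalan seed counts $\prod_i(e_i+h+1)(e_i+1)^{-1}$, the appeal to $G$-equivariant Floer and microlocal invariants, and the candid acknowledgement that the whole scheme is conditional on Conjecture \ref{conj:ADE} (whose exhaustion half is itself open even for the Hopf link).

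There is, however, one concrete issue that the paper explicitly flags in a footnote and which your proposal passes over. In the $B_n$ case, the naive $\Z_2$-equivariant Pan construction --- opening crossings of the symmetric front in $\Z_2$-related pairs and identifying sequences by the usual $312$-pattern argument of \cite{EkholmHondaKalman16,PanCatalanFillings} --- yields only $\frac{1}{n}{2n\choose n}$ (the ordinary Catalan number $C_n$) distinct $\Z_2$-fillings, not the required ${2n\choose n}$. Your assertion that ``running all admissible sequences of $G$-equivariant disk surgeries would then produce one exact Lagrangian $G$-filling per $G$-invariant seed'' therefore cannot be taken at face value: the $G$-invariant objects in the unfolded $A_{2n-1}$ picture undershoot the seed count of the folded $B_n$ cluster algebra, and some additional rule is needed (the paper suggests the crossing nearest the $\Z_2$-axis must play a distinguished role). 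This means the \emph{construction} half of your argument is incomplete, not merely the exhaustion half you flag; and whatever supplies the missing ${2n\choose n}-C_n$ fillings must also be accommodated by your claim that folded microlocal monodromies separate the fillings. Until that discrepancy is resolved, the proposed bijection between equivariant surgery sequences and folded cluster seeds does not hold as stated.
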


\begin{center}
	\begin{figure}[h!]
		\centering
		\includegraphics[scale=0.8]{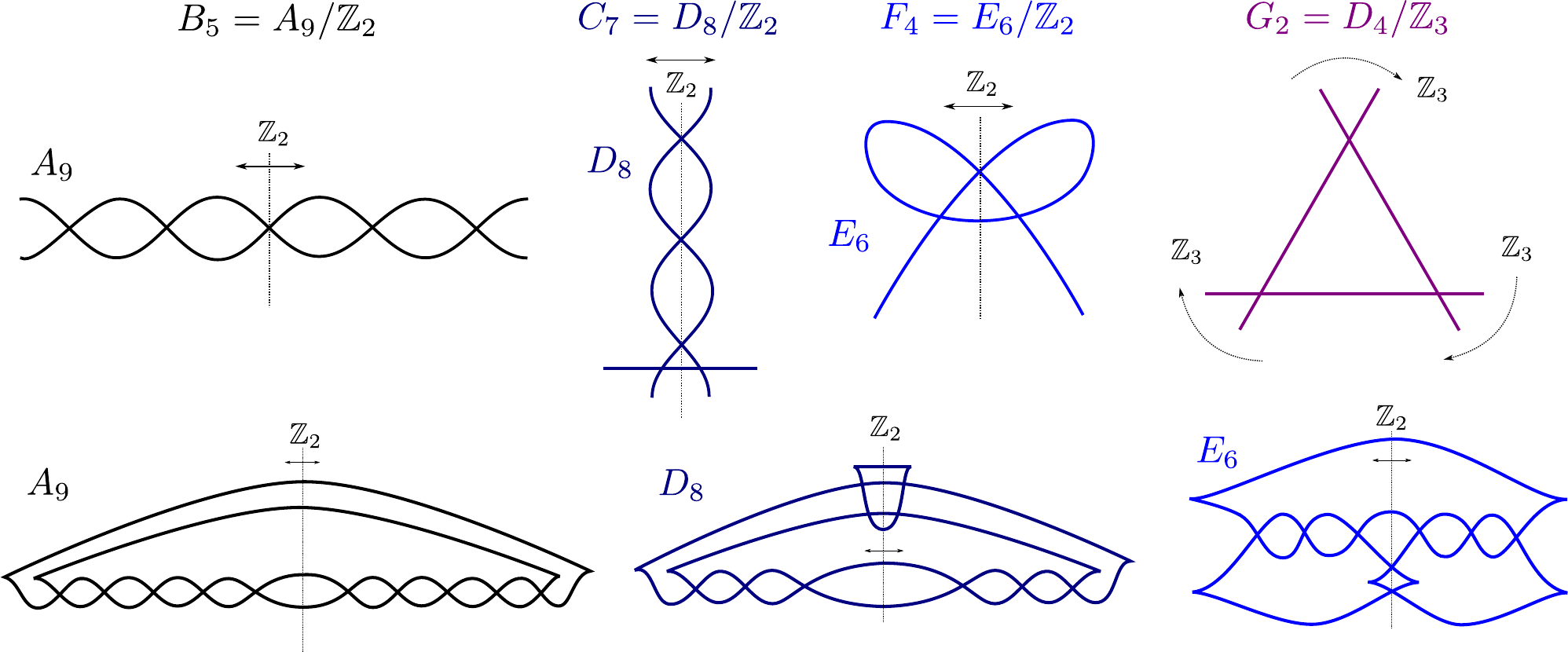}
		\caption{Legendrian fronts for $\La(A_{2n-1}),\La(D_{n+1}),\La(E_6),\La(D_4)$ with $G$-symmetries, $G=\Z_2,\Z_3$. The upper row exhibits these symmetric fronts as divides of the associated singularities, and the lower row depicts them in the standard front projection $(x,y,z)\mapsto(x,z)$ for a Darboux chart $(\R^3,\xi_\st)$.}
		\label{fig:BoundarySing}
	\end{figure}
\end{center}

For the $G_2$-case in Conjecture \ref{conj:BCFG}.(4), it might be helpful to notice that the $D_4$-singularity is topologically equivalent to $f(x,y)=x^3+y^3$. The $\Z_3$-symmetry cyclically interchanges the three linear branches of this singularity. In particular, we can draw a front for the Legendrian $\La(D_4)$ as the $(3,3)$-torus link, the rainbow closure of $\beta=(\sigma_1\sigma_2)^6$.\footnote{The $\Z_3$-action should coincide with the loop $\Xi_1\circ(\delta^{-1}\circ\Xi_1\circ\delta)$ from \cite[Section 2]{CasalsHonghao}.}

For the $B_n$-case in Conjecture \ref{conj:BCFG}.(1), the construction of ${2n \choose n}$ distinct Lagrangian $\Z_2$-fillings likely follows from adapting \cite{PanCatalanFillings}. Indeed, in the $\Z_2$-invariant front for $\La(A_{2n-1})$, as depicted in Figure \ref{fig:BoundarySing}, there are $n$ crossing to the left, equivalently right, of the $\Z_2$-symmetry axis. We can construct a $\Z_2$-filling of $\La(A_{2n-1})$ by opening those $n$ crossings in any order, with the rule that we simultaneously open the corresponding $\Z_2$-symmetric crossing.\footnote{The naive count of $312$-pattern avoiding permutations from \cite{EkholmHondaKalman16,PanCatalanFillings} would indicate that there are $\frac{1}{n}{2n\choose n}$ such Lagrangian $\Z_2$-fillings, instead of ${2n\choose n}$. Thus, should Conjecture \ref{conj:BCFG} hold, there must be an additional rule for $\Z_2$-fillings (not just those in \cite[Lemma 3.10]{PanCatalanFillings}), possibly related to the fact that the crossing closest to the $\Z_2$-axis is different from the rest.} Should one distinguish these $\Z_2$-fillings via their augmentations, as in \cite{PanCatalanFillings}, an appropriate $G$-equivariant Floer theoretic invariant (e.g. $G$-equivariant DGA and its augmentations) needs to be defined. The perspective of microlocal sheaves \cite{TreumannZaslow} yields combinatorics closer to those of triangulations \cite[Section 12.1]{FominZelevinsky03_Finite}, modeling $A_n$-cluster algebras, and thus might provide a simpler route to distinguish these fillings. In either case, Conjecture \ref{conj:BCFG} calls for a $G$-equivariant theory of invariants for Legendrian submanifolds of contact manifolds.

%%%%%%%%%%%%%%%%%%%%%%%%%%%%%%%%%%%%%%%%%%%%%%%%%%%%%%%%%%%%%%%%%%%%%%%%%%%%%%%%%%%%%%%%%%%%%%%%%%%%%%%%%%%%%%%%%%%%%%%%%%%%%%%%%%%%%%%
\subsection{Some Questions} We finalize this section with a series of problems on Weinstein 4-manifolds and their Lagrangian skeleta. To my knowledge, there are several unanswered questions at this stage, including checkable characterizations of Weinstein 4-manifolds of the form $W(\La_f)$, where $\La_f$ is the Legendrian link of an isolated plane curve singularity. Here are some interesting, yet hopefully reasonable, problems:

\color{coolblack}{\it Problem 1}. \color{black} Find a characterization of Legendrian links $\La\sse(\S^3,\xi_\st)$ for which $(\C^2,\La)$, or $W(\La)$, admits a Cal-skeleton. (Ideally, a verifiable characterization.)

\color{coolblack}{\it Problem 2}. \color{black} Find necessary and sufficient conditions for a Lagrangian skeleton $\L\sse(W,\la)$ to guarantee that the Stein manifold $(W,\la)$ is an affine algebraic manifold. Similarly, characterize Legendrian links $\La\sse(\S^3,\xi_\st)$ such that $W(\La)$ is an affine algebraic variety.

Note that the standard Legendrian unknot $\La_0\cong\La(A_0)\sse(\S^3,\xi_\st)$ and the max-tb Hopf link $\La(A_1)\sse(\S^3,\xi_\st)$ yield {\it affine} Weinstein manifolds, as we have
$$W(\La_0)\cong\{(x,y,z)\in\C^3:x^2+y^2+z^2=1\},\quad W(\La(A_1))\cong\{(x,y,z)\in\C^3:x^3+y^2+z^2=1\}.$$
By \cite[Section 4.1]{CasalsMurphy19}, the trefoil $\La(A_2)$ is also an example of such a Legendrian link, as
$$W(\La(A_2))\cong\{(x,y,z)\in\C^3:xyz+x+z+1=0\}.$$
Heuristic computations indicate that $\La(A_3)$ and $\La(D_4)$ also have this property. See \cite{McLean12,McLean18} for a source of necessary conditions, and \cite{NRTZ14} for (topological) skeleta of affine hypersurfaces.

\color{coolblack}{\it Problem 3}. \color{black} Find necessary and sufficient conditions for a Lagrangian skeleton\footnote{Not closed in this case.} $\L\sse(W,\la)$ to guarantee that the Stein manifold $(W,\la)$ is flexible.\footnote{See \cite{CieliebakEliashberg12} for flexible Weinstein manifolds. In the 4-dimensional case above, we might just define flexible as being of the form $W=W(\La)$ where $\La$ is a stabilized knot.} (Again, a verifiable characterization.) Similarly, characterize $\La\sse(\S^3,\xi_\st)$ such that $W(\La)$ is flexible.

Note that affine manifolds $W\sse\C^N$ might be flexible \cite[Theorem 1.1]{CasalsMurphy19}. In particular, it could be fruitful to compare Lagrangian skeleta of $X_m=\{(x,y,z)\in\C^3:x^my+z^2=1\}$ for $m=1$ and $m\geq2$, e.g. the ones provided in \cite{NRTZ14}.

\color{coolblack}{\it Problem 4}. \color{black} Suppose that a Weinstein 4-manifold $W=W(\La)$ is obtained as a Lagrangian 2-handle attachment to $(\D^4,\omega_\st)$. Given a Cal-skeleton $\L\sse(W,\la)$, devise an algorithm to find one such possible Legendrian $\La\sse(\dd\D^4,\xi_\st)$.

\color{coolblack}{\it Problem 5}. \color{black} Let $L\sse(W,\la)$ be a closed exact Lagrangian surface. Study whether there exists a Cal-skeleton $\L\sse(W,\la)$ such that $L\sse\L$. In addition, study whether there exists a Legendrian handlebody $\La\sse(\#^k\S^1\times\S^2,\xi_\st)$, so that $W=W(\La)$, and $L$ is obtained by capping a Lagrangian filling of a Legendrian sublink of $\La$.

See \cite{Verine20} for an interesting construction in the case of Bohr-Sommerfeld Lagrangian submanifolds and see \cite{EGL20} for a general discussion on regular Lagrangians. The nearby Lagrangian conjecture holds for $W=T^*\S^2,T^*\mathbb{T}^2$, thus the answer is affirmative in these cases.

\color{coolblack}{\it Problem 6}. \color{black} Characterize which cluster algebras $A$ can arise as the ring of functions of the augmentation stack of a Legendrian link $\La\sse(\S^3,\xi)$.

By using double-wiring diagrams \cite{BFZ05}, (generalized) double Bruhat cells satisfy this property \cite{ShenWeng19}. It is proven in \cite{CasalsNg20,GSW20_1} that the cluster algebras $A(\wt D_n)$ of affine $D_n$-type have this property. Heuristic computations indicate that the affine types $\wt A_{p,q}$ also verify this \cite{CasalsNg20}. It might be reasonable to conjecture that cluster algebras of surface type all have this property.

\color{coolblack}{\it Problem 7}. \color{black} Let $a_3(\La)$ be the number of $A_3$-arboreal singularities of a Cal-skeleton $\L\sse(W,\la)$. Find the number $a_3(W):=\min_{\L\sse W}a_3(\L)$, where $\L\sse W$ runs amongst all possible Cal-skeleta. In particular, characterize Weinstein 4-manifolds $(W,\la)$ with $a_3(W)=0$.

\color{coolblack}{\it Problem 8}. \color{black} Develop a combinatorial theory of symplectomorphisms in $\Symp(W,d\la)$ in terms of Cal-skeleta $\L\sse(W,\la)$.

This is being developed in the case $\dim(W)=2$ by using A'Campo's t\^{e}te-\`a-t\^{e}te twists \cite[Section 3]{ACampo18}, see also \cite[Section 5]{AFPP17}. A (symplectic) mapping class in $\Symp(W,d\la)$ is a composition of Dehn twists in this 2-dimensional case. This is no longer the case in $\dim(W)=4$, e.g. due to the existence of Biran-Giroux's fibered Dehn twists, confer \cite[Section 3]{Uljarevic17} and \cite[Section 2]{WehrheimWoodward16}. Note that $\pi_0(\Symp(W))$ might be infinite even if $W$ contains no exact Lagrangian 2-spheres \cite{CasalsHonghao}.

\color{coolblack}{\it Problem 9}. \color{black} Compare Cal-skeleta $\L_1\sse(W_1,\la_1)$, $\L_2\sse(W_2,\la_2)$ for exotic Stein pairs $W_1,W_2$. That is, $W_1$ is homeomorphic to $W_2$, but not diffeomorphic. In particular, investigate {\it skeletal corks}: combinatorial modifications on a Cal-skeleton that can produce exotic Stein pairs.

In \cite{Naoe18}, H. Naoe uses Bing's house \cite{Bing64} to study some such corks.

\color{coolblack}{\it Problem 10}. \color{black} Find a contact analogue of Turaev's Shadow formula\footnote{This expresses the $SU(2)$-Reshetikhin-Turaev-Witten quantum invariant of a 3-manifold in terms of a shadow as a (colored) multiplicative Euler characteristic.} \cite[Chapter 10]{Turaev94} for the contact 3-dimensional boundary in terms of the combinatorics of a Cal-skeleton $\L\sse(W,\la)$. That is, find a {\it contact} invariant\footnote{E.g. it would be interesting to describe the Ozsvath-Szabo contact class in Heegaard Floer homology, or M. Hutching's contact class in Embedded Contact homology, in terms of $\L$ as well.} of $(\dd W,\la|_{\dd W})$ which can be computed in terms of the combinatorics of $\L\sse(W,\la)$.

%%%%%%%%%%%%%%%%%%%%%%%%%%%%%%%%%%%%%%%%%%%%%%%%%%%%%%%%%%%%%%%%%%%%%%%%%%%%%%%%%%%%%%%%%%%%%%%%%%%%%%%%%%%%%%%%%%%%%%%%%%%%%%%%%%%%%%%

%\nocite{*}
\bibliographystyle{plain}
\bibliography{LagrSkel_Casals}

\end{document}